\newtheorem{lemma}{Lemma}[section]
\newtheorem{remark}{Remark}[section]
\newtheorem{theorem}{Theorem}[section]
\newtheorem{problem}{Problem}
\newcommand{\GR}[1]{{\color{magenta}#1}}
\def\CE{\mathcal{E}}
\def\CM{\mathcal{X}}
\def\CN{\mathcal{Y}}
\def\CT{\mathcal{T}}
\def\E{\mathrm{K}}
\def\G{\Gamma}
\def\H{\mathrm{H}}
\def\HuO{\H^1(\O)}
\def\LO{L^2(\O)}
\def\div{\mathop{\mathrm{div}}\nolimits}
\def\bn{\boldsymbol{n}}
\def\bu{\boldsymbol{u}}
\def\bv{\boldsymbol{v}}
\def\bT{\mathbf{T}}
\def\O{\Omega}
\def\R{\mathbb{R}}
\def\hdel{\widehat{\delta}}
\def\l{\lambda}
\def\sp{\mathop{\mathrm{sp}}\nolimits}
\def\Vh{V_h}
\def\bsig{\boldsymbol{\sigma}}
\def\btau{\boldsymbol{\tau}}
\def\CT{\mathcal{T}}
\def\G{\Gamma}
\def\HrE{{\H^{r}(\E)}}
\def\HtO{{\H^{t}(\O)}}
\def\HuO{{\H^1(\O)}}
\def\HutO{{\H^{1+t}(\O)}}
\def\H{\mathrm{H}}
\def\L{\mathrm{L}}
\def\LO{\L^2(\O)}
\def\O{\Omega}
\def\R{{\mathbb{R}}}
\def\bS{\mathbf{S}}
\def\bT{T}
\def\bV{\mathcal{V}}
\def\mQ{\mathcal{Q}}
\def\bn{\boldsymbol{n}}
\def\bpi{\boldsymbol{\Pi}}
\def\bu{\boldsymbol{u}}
\def\bv{\boldsymbol{v}}
\def\div{\mathop{\mathrm{div}}\nolimits}
\def\ds{\,ds}
\def\hdel{\widehat{\delta}}
\def\l{\lambda}
\def\rot{\mathop{\mathrm{rot}}\nolimits}
\renewcommand\sp{\mathop{\mathrm{sp}}\nolimits}
\newcommand\bbP{\mathbb{P}}
\journal{}
\date{\today}
\def\rot{\text{rot}}
\begin{document}
\begin{frontmatter}

\title{A priori error analysis for a mixed VEM discretization
of the spectral problem for the Laplacian operator.}

\author[1]{Felipe Lepe}
\ead{flepe@ubiobio.cl}
\address[1]{Departamento de Matem\'atica,
Universidad del B\'io-B\'io, Casilla 5-C, Concepci\'on, Chile.}
\author[2]{Gonzalo Rivera}
\ead{gonzalo.rivera@ulagos.cl}
\address[2]{Departamento de Ciencias Exactas, Universidad de Los Lagos,
Casilla 933, Osorno, Chile.}

\begin{abstract} 
The aim of the present work is to derive a error estimates
for the Laplace eigenvalue problem in mixed form, by means of a virtual element method. With the aid of  the theory for non-compact operators, we prove that the proposed method is spurious free and convergent.
We prove optimal order  error estimates for the eigenvalues and eigenfunctions. Finally, we report numerical tests to confirm the theoretical results together with a rigorous computational analysis of the effects of the stabilization in the computation of the spectrum.
\end{abstract}

\begin{keyword} 
Mixed virtual element method 
\sep Laplace eigenvalue problem 
\sep error estimates
\MSC 35P15\sep 35Q35\sep 65N15 \sep 65N30 \sep 76B15.
\end{keyword}

\end{frontmatter}


\setcounter{equation}{0}
\section{Introduction}
\label{SEC:INTR}

In the recent years, the virtual element method (VEM), which is a generalization of the classic
finite element method to polygonal meshes, has shown important breakthroughs in the numerical
resolution of partial differential equations. 

The eigenvalue problems are a subject of study where the classic numerical methods provided by the finite element method (FEM)
has been plenty developed in different contexts, as for example, acoustic interactions, elastoacustic problems, elasticity problems, vibrations of structures, fluid stability, etc. Due the importance of knowing the natural vibration frequencies of the mentioned problems, is relevant to have numerical tools that improve the accuracy  in the approximation of solutions, with reduced computational costs. Is in this sense where the VEM presents important features in comparison with FEM, that makes it attractive for mathematicians and engineers.

A priori error estimates for spectral problems implementing VEM has been developed in the past years, with important results. We mention \cite{BMRR17, CGMMV, GMV,  GV,  LMRV, MM,MR2019, MRR1,  MV2}, only to mention a few.  Although, in the literature is possible to find several studies on the implementation of VEM methods for mixed formulations like, \cite{ABMV,BBMR2015,BLV-M2AN,ultimo,CG,CGS17,GMF18,GMF182}. On the other hand, the first work related to spectral problems with mixed formulations is presented in \cite{MZM}  where  a VEM for the mixed formulation of the Laplace eigenvalue problem has been analyzed. For the analysis, the authors lie in the well developed theory of \cite{Boffi} and take advantage of the compact solution operator in order to obtain
convergence of the eigenvalues and eigenfunctions of the Laplace eigenproblem, and therefore, error estimates, using the classic theory of \cite{BO}. Moreover, in this references have analyzed as VEM for virtual spaces BDM-type, where the local spaces are defined for polynomial of degree $k\geq 1$, which have an additional cost compared with the VEM spaces similar to Raviart-Thomas elements (i.e, for $k\geq 0$).
%

We are interested in mixed formulations for eigenvalue problems and as corner stone for more challenging mixed formulations, we begin with the mixed formulation for the Laplace eigenvalue problem in two dimensions. In one hand, we present a rigorous mathematical analysis for the proposed VEM method, which is based the general theory of non-compact operators of \cite{DNR1} in first place,  in order  to prove convergence of our method. The error estimates for the eigenfunctions and eigenvalues will be derived by adapting the results of \cite{DNR2} for the VEM framework and the VEM spaces that we will analyze are of the Raviart-Thomas-type, where the cost of implementation is less than the BDM-type spaces. With this choice of VEM spaces, we will prove that our method is convergent, spurious free and delivers the optimal double order of convergence for the eigenvalues.

On the other hand, it is well known from the literature that some numerical methods that depend on some particular stabilizations may introduce spurious eigenvalues for certain choices of this parameter. Recently for DG methods based in interior penalization,  applied in spectral problems, this phenomena has been studied  in \cite{MR3962898, LM} and also for VEM methods  in \cite{BMRR17, MRR1}. Since our mixed formulation also depends on a stabilization, which is intrinsic in the VEM framework, we will also study from a numerical point of view how this stabilization affects the computation of the spectrum in different polygonal meshes in order to obtain a threshold in which our method works perfectly.

Also, we will discuss our proposed VEM  for a more general Laplace eigenproblem, where the boundary can be splitted in two parts: a Dirichlet and Neumann boundary. This mixed boundary conditions are relevant for our purposes, since the regularity of the solution is clearly affected for this nature of the splitted boundary and hence, the computation of the spectrum may introduce spurious modes, which needs to be controlled by means of the stabilization term of our VEM.

The paper is organized as follows: In section \ref{SEC:STAT} we present the Laplace eigenvalue problem, the mixed formulation for the problem, and we recall important properties of this problem, as the spectral characterization and regularity results. In section \ref{SEC:DISCRETE} we introduce the standard hypothesis for the mesh that the VEM framework requires, the virtual spaces, degrees of freedom and hence, the discrete bilinear forms that are considered for the discrete mixed formulation. 
Finally, in section \ref{SEC:NUMER}, we report some numerical tests that illustrates the performance of the method and confirms the theoretical results obtained in the previous sections together with a computational analysis of the effects of the stabilization in the computation of the spectrum in a domain with mixed boundary conditions.


\setcounter{equation}{0}
\section{The spectral problem}
\label{SEC:STAT}

Let $\O\subset\mathbb{R}^2$ be an open bounded domain with
Lipschitz boundary $\Gamma$. The Laplace eigenvalue problem reads as follows:
\begin{problem}
Find $(\l,u)\in\R\times\HuO$, $u\ne0$, such that
\begin{equation}
\label{eq:laplace}
\left\{\begin{array}{l}
-\Delta u=\lambda u\quad\text{in }\O,
\\[0.1cm]
u=0  \,\,\,\quad\quad\text{ on }\G.
\end{array}\right. 
\end{equation}
\end{problem}

In order to obtain a mixed variational formulation
of \eqref{eq:laplace}, we introduce the additional
unknown $\bsig=\nabla u$. Then, replacing this new
unknown in \eqref{eq:laplace}, multiplying with
suitable test functions, integrating by parts,
and using the boundary condition,  we obtain the
following equivalent mixed weak formulation:

\begin{problem}
\label{P1}
Find $(\l,\bsig, u)\in\R\times\H(\div,\O)\times\LO$,
$(\bsig,u)\neq(\boldsymbol{0},0)$, such that 
\begin{equation*}\label{2}
\begin{split}
\int_{\O}\bsig\cdot\btau+\int_{\O}\div\btau u=&0
\qquad\forall\btau\in\H(\div,\O),\\
\int_{\O}\div\bsig v=&-\lambda \int_{\O}uv\qquad\forall v\in\LO.
\end{split}
\end{equation*}
\end{problem}

We define the spaces $\mathcal{V}:=\H(\div,\O)$ and $\mathcal{Q}=\LO$.
Let us remark that these spaces will be endowed with the usual
norms which we denote by $\|\cdot\|_{\mathcal{V}}$ and
$\|\cdot\|_{\mathcal{Q}}$, respectively, and  the product
space $\mathcal{V}\times Q$ will be endowed with the natural
norm of product spaces which we denote by $\|\cdot\|_{\mathcal{V}\times\mathcal{Q}}$.

With these definitions at hand, we introduce the bilinear
forms $a:\mathcal{V}\times\mathcal{V}\rightarrow\mathbb{R}$
and $b:\mathcal{V}\times \mathcal{Q}\rightarrow\mathbb{R}$,
defined as follows
\begin{align*}
a(\bsig,\btau) :=\int_{\Omega}\bsig\cdot\btau, \quad \bsig,\btau\in\mathcal{V},
\quad
b(\btau,v) :=\int_{\O}v\div\btau, 
\qquad \btau\in\mathcal{V}, \,v\in \mathcal{Q}.
\end{align*}

Then, if $(\cdot,\cdot)_{\mathcal{Q}}$ denotes the usual
$\mathcal{Q}$ inner-product, we rewrite Problem~\ref{P1} as follows:
 \begin{problem}
\label{P2}
Find $(\l,\bsig,u)\in\R\times\mathcal{V}\times \mQ$, $(\bsig,u)\ne(\boldsymbol{0},0)$, such that
\begin{align*}
a(\bsig,\btau)+b(\btau, u)&=0 \qquad\qquad\,\,\forall \btau\in\mathcal{V},\\
b(\bsig,v)&=-\lambda(u,v)_{\mathcal{Q}}\qquad\forall v\in \mathcal{Q}.
\end{align*}
\end{problem}
 We remark that each of the previous bilinear forms are bounded and symmetric.
 
 Let $\mathcal{K}$ be the kernel of bilinear form $b(\cdot,\cdot)$ defined as follows:
 \begin{equation*}
 \mathcal{K}:=\{\btau\in\mathcal{V}\,:\,\, b(\btau, v)=0\quad\forall v\in \mathcal{Q}\}
 =\{\btau\in\mathcal{V}\,:\,\, \div\btau=0\,\,{\textrm in}\,\, \O\}.
 \end{equation*}
 
 Is is well-known that bilinear form $a(\cdot,\cdot)$
 is elliptic in $\mathcal{K}$ and that $b(\cdot,\cdot)$
 satisfies the following inf-sup condition (see \cite{bbf-2013})
 \begin{equation}\label{cont-infsup}
 \displaystyle\sup_{\boldsymbol{0}\neq\btau\in\mathcal{V}}
 \frac{b(\btau,v)}{\|\btau\|_{\mathcal{V}}}\geq\beta\|v\|_{\mathcal{Q}}\qquad\forall v\in \mathcal{Q}, 
 \end{equation}
where $\beta$ is a positive constant.
\begin{remark}
The eigenvalues of Problem~\ref{P2} are positive.
Indeed, taking $\btau=\bsig$ and $v=u$ in Problem~\ref{P2} and
adding the resulting forms, we have obtain
$$\lambda=\frac{a(\bsig,\bsig)}{\|u\|_{\mQ}^2}\geq0.$$
In addition, $\lambda=0$ implies $(\bsig,u)=(\boldsymbol{0},0)$. 
\end{remark}

To analyze Problem~\ref{P2}, we introduce the following linear solution operator $\bT$
\begin{align*}
\nonumber \bT:\;\mathcal{Q} & \longrightarrow\mathcal{Q},
\\
f & \longmapsto\bT f:=\widetilde{u},\label{eq:opT}
\end{align*}
where $(\tilde{\bsig},\tilde{u})\in\mathcal{V}\times\mathcal{Q}$
is the solution of the corresponding source problem:
\begin{equation}
\label{eq:source}
\left\{
\begin{array}{rcc}
a(\widetilde{\bsig},\btau)+b(\btau,\widetilde{u})&=0 \qquad\qquad\,\,\forall \btau\in\mathcal{V},\\
b(\widetilde{\bsig},v)&=-(f,v)_{\mathcal{Q}}\qquad\forall v\in \mathcal{Q},
\end{array}\right.
\end{equation}
which is the variational formulation of the following problem
\begin{equation}
\label{eq:fuerte}
\left\{\begin{array}{l}
\widetilde{\bsig}=\nabla \widetilde{u}\quad\text{in }\O,\\[0.1cm]
\div \widetilde{\bsig}=-f\quad\text{in }\O,
\\[0.1cm]
\widetilde{u}=0  \,\,\,\quad\quad\text{ on }\G.
\end{array}\right. 
\end{equation}
From the fact that $a(\cdot,\cdot)$ is $\mathcal{K}$-elliptic
and \eqref{cont-infsup}, it is well known that problem~\eqref{eq:source}
admits a unique solution
$(\widetilde{\bsig},\widetilde{u})\in\mathcal{V}\times\mathcal{Q}$
and there exists a positive constant $C$ such that
\begin{equation}
\label{eq:bien}
\|(\widetilde{\bsig},\widetilde{u})\|_{\bV\times\mathcal{Q}}\leq C\|f\|_{Q}.
\end{equation}

As a consequence, we have that $\bT$ is well defined,
self-adjoint with respect to $(\cdot,\cdot)_{\mathcal{Q}}$
and compact. Moreover, if $(\l,(\bsig, u))\in\R\times\mathcal{V}\times\mathcal{Q}$
solves Problem~\ref{P1} if and only if
$(1/\l,u)$ is an eigenpair of $\bT$, i.e, if 
\begin{equation*}
\bT u=\mu u,\quad\text{ with }\mu:=\frac{1}{\l}.
\end{equation*}

According to \cite{agmon}, the regularity for the solution
of \eqref{eq:source} is the following: there exists a constant
$r>1/2$ depending on $\Omega$ such that the solution $\widetilde{u}\in\H^{1+r}(\Omega)$,
where $r$ is at least 1 if $\Omega$ is convex and $r$ is at least
$\pi/\omega-\varepsilon$, for any $\varepsilon>0$ for a non-convex domain,
with $\omega<2\pi$ being the largest reentrant angle of $\Omega$.

Hence we have the following additional regularity result
for the solution of problem~\eqref{eq:source}.

\begin{lemma}
\label{additional_source}
There exist a positive constant $C$ such that
\begin{equation*}
\|\widetilde{\bsig}\|_{r,\O}+\|\widetilde{u}\|_{1+r}\leq C\|f\|_{\mathcal{Q}}.
\end{equation*}
\end{lemma}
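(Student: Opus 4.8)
The plan is to exploit the equivalence between the source problem \eqref{eq:source} and the classical homogeneous Dirichlet problem for the Poisson equation, and then to transfer the regularity of the scalar unknown $\widetilde{u}$ to the vector unknown $\widetilde{\bsig}$ via the relation $\widetilde{\bsig}=\nabla\widetilde{u}$. The high-order regularity statement quoted above from \cite{agmon} supplies precisely the Sobolev regularity we need, so the proof reduces to invoking that result together with its accompanying a priori estimate and combining it with an elementary bound on the gradient.

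First I would observe that eliminating $\widetilde{\bsig}$ from the strong form \eqref{eq:fuerte} yields $-\Delta\widetilde{u}=f$ in $\O$ with $\widetilde{u}=0$ on $\G$; that is, $\widetilde{u}$ is the weak solution of the Poisson problem with homogeneous Dirichlet data and right-hand side $f\in\LO$. The quoted Agmon-type regularity result then guarantees $\widetilde{u}\in\HurO$ for the stated exponent $r>1/2$, together with the a priori bound
\begin{equation*}
\|\widetilde{u}\|_{1+r}\leq C\|f\|_{\mathcal{Q}}.
\end{equation*}
This higher-order estimate is the genuine content of the lemma: the basic well-posedness of \eqref{eq:source} already furnishes the weaker $\bV\times\mathcal{Q}$ bound \eqref{eq:bien}, and what the cited regularity theory adds is the shift to the $\HurO$ norm.

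Second, I would bound $\widetilde{\bsig}$ by exploiting $\widetilde{\bsig}=\nabla\widetilde{u}$. Since the gradient maps $\HurO$ boundedly into $\H^{r}(\O)^2$, each component being controlled by the full $\HurO$ norm of $\widetilde{u}$, one obtains
\begin{equation*}
\|\widetilde{\bsig}\|_{r,\O}=\|\nabla\widetilde{u}\|_{r,\O}\leq C\|\widetilde{u}\|_{1+r}\leq C\|f\|_{\mathcal{Q}}.
\end{equation*}
Adding this to the estimate for $\widetilde{u}$ yields the desired inequality.

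The only delicate point is the first step, namely securing the $\HurO$ a priori estimate for $\widetilde{u}$. This does not follow from the variational argument leading to \eqref{eq:bien}, which only provides $\H^1$ control; it relies essentially on the elliptic regularity theory for the Laplacian on Lipschitz (possibly non-convex) domains, where the admissible exponent $r$ is dictated by the largest reentrant angle $\o<2\pi$ of $\O$. Once this cited result is in place, the passage to $\widetilde{\bsig}$ is immediate and the proof concludes.
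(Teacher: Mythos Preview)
Your proposal is correct and follows exactly the approach the paper intends: the lemma is stated without proof in the paper, immediately after the paragraph quoting the Agmon regularity result, with the connecting phrase ``Hence we have the following additional regularity result,'' so it is meant as a direct consequence of that cited regularity for $\widetilde{u}$ together with the relation $\widetilde{\bsig}=\nabla\widetilde{u}$ from \eqref{eq:fuerte}. Your write-up simply makes these implicit steps explicit.
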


On the other hand, since $\bT$ is a self-adjoint compact operator,
we have the following spectral characterization result (see \cite{BO}).
\begin{lemma}
\label{lmm:charact}
The spectrum of $\bT$ satisfies $\sp(\bT)=\{0\}\cup\{\mu_n\,:\,n\in\mathbb{N}\}$,
where $\{\mu_n\}_{n\in\mathbb{N}}$ is a sequence of positive eigenvalues
which converge to zero with the multiplicity of each non-zero eigenvalue being finite.
In addition, the following additional regularity result holds true for eigenfunctions
\begin{equation*}
\label{eq:additional_eigenfunctions}
\|\bsig\|_{\widetilde{r},\O}+\|u\|_{1+\widetilde{r}}\leq C\|u\|_{\mathcal{Q}},
\end{equation*}
with $\widetilde{r}>1/2$ and $C>0$ depending on the eigenvalue.
\end{lemma}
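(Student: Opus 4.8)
The plan is to treat the two assertions separately: the structural description of $\sp(\bT)$ follows from abstract spectral theory, while the regularity estimate is obtained by reinterpreting an eigenfunction as the solution of the source problem \eqref{eq:source} with itself as data.

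For the spectral characterization, I would invoke the classical spectral theorem for compact self-adjoint operators on a Hilbert space (as in \cite{BO}), applied to $\bT:\mQ\to\mQ$. We have already established that $\bT$ is well defined, compact and self-adjoint with respect to $(\cdot,\cdot)_{\mQ}$, and since $\mQ=\LO$ is infinite dimensional, the theorem yields $\sp(\bT)=\{0\}\cup\{\mu_n : n\in\N\}$ with each nonzero $\mu_n$ real, of finite multiplicity, and the only possible accumulation point being $0$. To guarantee that $\{\mu_n\}_{n\in\N}$ is genuinely an infinite sequence (and hence converges to zero), I would first check that $\bT$ is injective: if $\bT f=\widetilde{u}=0$, the first equation of \eqref{eq:source} gives $a(\widetilde{\bsig},\btau)=0$ for all $\btau\in\bV$, whence $\widetilde{\bsig}=\0$ by testing with $\btau=\widetilde{\bsig}$, and then the second equation forces $(f,v)_{\mQ}=0$ for all $v\in\mQ$, i.e. $f=0$. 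An injective compact self-adjoint operator on an infinite dimensional space has infinitely many nonzero eigenvalues, so the sequence is infinite. Positivity of the $\mu_n$ is immediate from $\mu_n=1/\l_n$ together with the preceding remark, which shows that every eigenvalue $\l$ of Problem~\ref{P2} is strictly positive.

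For the regularity estimate, the key observation is that an eigenpair $\bT u=\mu u$ can be read off \eqref{eq:source}: taking $f=u$ there produces the solution $\widetilde{u}=\bT u=\mu u$ and, since $\widetilde{\bsig}=\nabla\widetilde{u}$ by \eqref{eq:fuerte}, also $\widetilde{\bsig}=\mu\bsig$ with $\mu=1/\l$. I would then apply Lemma~\ref{additional_source} with this data, obtaining
\begin{equation*}
\frac1\l\big(\|\bsig\|_{r,\O}+\|u\|_{1+r}\big)=\|\widetilde{\bsig}\|_{r,\O}+\|\widetilde{u}\|_{1+r}\leq C\|u\|_{\mQ},
\end{equation*}
and multiplying through by $\l$ gives the desired bound with the constant $C\l$ depending on the eigenvalue. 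The exponent obtained this way is $\widetilde{r}=r>1/2$; because the right-hand side $u$ is itself an eigenfunction, a bootstrap through \eqref{eq:fuerte} and the elliptic regularity theory of \cite{agmon} may raise the exponent to an eigenfunction-dependent $\widetilde{r}\geq r$, which is why the statement is phrased with $\widetilde{r}$ rather than $r$.

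The part that requires the most care is this last point: pinning down the sharp value of $\widetilde{r}$. In a convex domain one simply has $\widetilde{r}\geq 1$, but in a domain with a reentrant corner the exponent is limited by the corner singularity, and whether a given eigenfunction actually attains the worst-case regularity depends on its behaviour near that corner. For the purposes of the error analysis it is enough to record that $\widetilde{r}>1/2$ holds for every eigenfunction and that the constant degrades with the eigenvalue; the remaining structural claims are routine consequences of the spectral theorem.
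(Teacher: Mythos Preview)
Your argument is correct. The paper does not actually prove this lemma: it simply records the spectral characterization as a consequence of the compactness and self-adjointness of $\bT$, citing \cite{BO}, and states the regularity estimate without further justification. Your proposal supplies precisely the missing details---the spectral theorem for compact self-adjoint operators together with the injectivity check for the infinitude of the sequence, and the application of Lemma~\ref{additional_source} with $f=u$ (followed by a rescaling by $\l$) for the regularity bound---and these are the natural steps one would use to flesh out the paper's bare citation.
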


Now we are in position to introduce our approximation scheme.

\setcounter{equation}{0}
\section{The virtual element method}
\label{SEC:DISCRETE}
\subsection{Mesh assumptions and virtual spaces}
We begin this section establishing the framework in which we will operate. The VEM method
needs particular assumptions for the construction of the meshes, which are well established in 
\cite{BBCMMR2013}. Let $\left\{\CT_h\right\}$ be a
family of decompositions of $\O$ into polygons $\E$. Let $h_\E$ denote
the diameter of the element $\E$ and $h:=\max_{\E\in\O}h_\E$.

For the analysis, we make the following assumptions on the meshes as in
\cite{BBMR2015,ultimo}: there exists a positive real number $C_{\CT}$
such that, for every $\E\in\CT_h$ and for every $\CT_h$,
\begin{itemize}
\item $\mathbf{A_1}$: the ratio between the shortest edge and the
diameter of $\E$ is larger than $C_{\CT}$;
\\[-.3cm]
\item $\mathbf{A_2}$: $\E$ is star-shaped with respect to every
point of a ball of radius $C_{\CT}h_\E$.
\end{itemize}

For any subset $S\subseteq\R^2$ and any non-negative integer $k$, we
indicate by $\bbP_k(S)$ the space of polynomials of degree up to $k$
defined on $S$. To keep the notation simpler, we denote by $\bn$ a
generic normal unit vector; in each case, its precise definition will be
clear from the context. 

We consider now a polygon $\E$ and, for any fixed
non-negative integer $k$, we define the following finite dimensional
space (inspired in \cite{ultimo,BBMR2015}):
\begin{equation*}
\label{eq:local_space}
\mathcal{V}_{h}^{\E}:=\Big\{\btau_h\in\H(\div;\E):
\ \left(\btau_h\cdot\bn\right)\in\bbP_k(e)
\ \,\forall e\subset\partial \E,\ \div\btau_h\in\bbP_k(\E),
\ \rot\btau_h=0\text{ in }\E\Big\}.
\end{equation*}

We define the following degrees of freedom for functions $\btau_h$ in
$\mathcal{V}_h^\E$:
\begin{align}
\label{freedom}
\int_{e}\left(\btau_h\cdot\bn\right)q\ds
& \qquad\forall q\in\bbP_k(e),
\quad\forall\text{ edge }e\subset\partial \E,
\\
\label{freedom2}
\int_{\E}\btau_h\cdot\nabla q
& \qquad\forall q\in\bbP_k(\E)/\bbP_{0}(\E).
\end{align}
These degrees of freedom are unisolvent, as is stated in  \cite[Proposition 1]{BMRR17}.

For each decomposition $\CT_h$ of $\O$ into polygons $\E$, we define
\begin{equation}
\mathcal{V}_h:=\left\{\btau_h\in\H(\div,\O):\ \btau_h|_\E\in\bV_h^\E\right\}.
\nonumber
\end{equation}
In agreement with the local choice, we choose the following global
degrees of freedom:
\begin{align*}
\label{globalfreedom}
\int_{e}\left(\btau_h\cdot\bn\right)q\ds
& \qquad\forall q\in\bbP_k(e),
\quad\text{for each internal edge }e\not\subset\G,
\\
\int_{\E}\btau_h\cdot\nabla q
& \qquad\forall q\in\bbP_k(\E)/\bbP_{0}(\E),
\quad\text{for each element }\E\in\CT_h.
\end{align*}
Additionally we introduce the following finite dimensional space: 
$$\mathcal{Q}_{h}:=\{v_{h}\in\mathcal{Q}: v_{h}|_{\E}\in \bbP_k(\E),\quad \forall\, \E\in \CT_{h} \}.$$

As is customary in the VEM framework, the bilinear forms $a(\cdot,\cdot)$ and $b(\cdot,\cdot)$ are written elementwise
as follows
\begin{align*}
a(\bsig,\btau)&=\sum_{\E\in\mathcal{T}_h}a^\E(\bsig,\btau)=\sum_{\E\in\mathcal{T}_h}\int_{\E}\bsig\cdot\btau\qquad\bsig,\btau\in\bV_h,\\
b(\bsig, v)&=\sum_{\E\in\mathcal{T}_h}b^\E(\bsig,\btau)=\sum_{\E\in\mathcal{T}_h}\int_{\E}v\div\bsig\qquad\bsig\in\bV,\,\, v\in\mQ.
\end{align*}

Observe that with the degrees of freedom that we are operating, $a^\E(\cdot,\cdot)$ is not explicitly computable, contrary of
$b(\cdot, \cdot)$. For this reason we need to introduce a projection operator to circumvent this drawback.

First, we define for each polygon $\E$ the space
\begin{equation*}
\label{Ve}
\widehat{\bV}_h^\E
:=\nabla(\bbP_{k+1}(\E))
\subset\bV_{h}^{\E}.
\end{equation*}
Then, we define the $[\L^2(\E)]^2$-orthogonal projector
$\bpi_h^\E:\;[\L^2(\E)]^2\longrightarrow\widehat{\bV}_h^\E$ by
\begin{equation*}
\label{numero}
\int_{\E}\bpi_h^\E\btau\cdot\widehat{\bu}_h
=\int_{\E}\btau\cdot\widehat{\bu}_h
\qquad\forall\widehat{\bu}_h\in\widehat{\bV}_h^\E.
\end{equation*}


Let $S^\E(\cdot,\cdot)$ be any symmetric positive
definite (and computable) bilinear form to be chosen as to satisfy
\begin{equation}
\label{20}
c_0\,a^\E(\btau_h,\btau_h)\leq S^{\E}(\btau_h,\btau_h)
\leq c_1\,a^\E(\btau_h,\btau_h)
\qquad\forall\,\bV_h\in\btau_h^\E,
\end{equation}
for some positive constants $c_0$ and $c_1$ depending only on the 
constant $C_{\CT}$ from mesh assumptions $\mathbf{A_1}$ and
$\mathbf{A_2}$. Then, we define on each element $\E$ the bilinear form
\begin{equation*}
\label{21}
a_h^{\E}(\bsig_h,\btau_h)
:=\int_{\E}\bpi_h^\E\bsig_h\cdot\bpi_h^\E\btau_h
+S^{\E}\big(\bsig_h-\bpi_h^\E\bsig_h,\btau_h-\bpi_h^\E\btau_h\big),
\qquad\bsig_h,\btau_h\in\bV_h^\E, 
\end{equation*}
and, in a natural way,
$$
a_h(\bsig_h,\btau_h)
:=\sum_{\E\in\CT_h}a_h^{\E}(\bsig_h,\btau_h),
\qquad\bsig_h,\btau_h\in\bV_h.
$$
The following two properties of the bilinear form $a_h^\E(\cdot,\cdot)$
are easily derived by repeating in our case the arguments from
\cite[Proposition~4.1]{ultimo}.
\begin{itemize}
\item \textit{Consistency}: 
\begin{equation*}
\label{consistencia1}
a_h^{\E}(\bu_h,\btau_h)
=\int_{\E}\bu_h\cdot\btau_h
\qquad\forall\bu_h\in\widehat{\bV}_h^\E,
\quad\forall\btau_h\in\bV_h^\E,\quad\forall\, \E\in\CT_h.
\end{equation*}
\item \textit{Stability}: There exist two positive constants $\alpha_*$
and $\alpha^*$, independent of $\E$, such that:
\begin{equation}
\label{consistencia2}
\alpha_*\int_{\E}\btau_h\cdot\btau_h
\leq a_h^{\E}(\btau_h,\btau_h)
\leq\alpha^*\int_{\E}\btau_h\cdot\btau_h
\qquad\forall\btau_h\in\bV_h^\E,\quad\forall E\in\CT_h. 
\end{equation}
\end{itemize}

Now we are in a position to introduce the virtual element discretization of
Problem~\ref{P1}.

\subsection{The discrete eigenvalue problem}
With the VEM spaces and degrees of freedom defined above, we introduce the discretization of Problem~\ref{P2}
as follows
%
\begin{problem}
\label{P2_disc}
Find $(\l_h, \bsig_h, u_h)\in\R\times\mathcal{V}_h\times \mQ_h$, $(\bsig_h,u_h)\ne(\boldsymbol{0},0)$, such that
\begin{align*}
a_h(\bsig_h,\btau_h)+b(\btau_h, u_h)&=0 \qquad\qquad\,\,\,\,\,\,\,\,\,\,\forall \btau_h\in\mathcal{V}_h,\\
-b(\bsig_h,v_h)&=\lambda_h(u_h,v_h)_{\mQ}\qquad\forall v_h\in \mathcal{Q}_h.
\end{align*}
\end{problem}
  Let $\mathcal{K}_h$ be the discrete kernel of bilinear form $b(\cdot, \cdot)$ defined as follows:
 \begin{equation*}
 \mathcal{K}_h:=\{\btau_h\in\mathcal{V}_h\,:\,\, b(\btau_h, v_h)=0\quad\forall\, v_h\in \mathcal{Q}_h\}.
 \end{equation*}

We observe that by virtue of \eqref{consistencia2}, the bilinear form
$a_h(\cdot,\cdot)$ is bounded. Moreover, as is shown in the following
lemma, it is also uniformly elliptic.

\begin{lemma}
\label{ha-elipt-disc}
There exists a constant $\beta>0$, independent of $h$, such that
$$
a_h(\btau_h,\btau_h)
\ge\alpha\left\|\btau_h\right\|_{\bV}^2
\qquad\forall\,\btau_h\in\mathcal{K}_h.
$$
\end{lemma}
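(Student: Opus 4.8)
The plan is to exploit the special structure of the Raviart--Thomas--type virtual space, namely that the divergence of any field in $\mathcal{V}_h$ is already a discrete pressure. Indeed, from the definition of the local space $\mathcal{V}_h^{\E}$ we have $\div\btau_h|_{\E}\in\bbP_k(\E)$ for every $\E\in\CT_h$, so that $\div\btau_h\in\mathcal{Q}_h$ for every $\btau_h\in\mathcal{V}_h$. First I would use this observation to show that the discrete kernel consists of divergence--free fields. Given $\btau_h\in\mathcal{K}_h$, since $\div\btau_h\in\mathcal{Q}_h$ is an admissible test function, I take $v_h=\div\btau_h$ in the defining relation of $\mathcal{K}_h$ and obtain
\begin{equation*}
0=b(\btau_h,\div\btau_h)=\int_{\O}(\div\btau_h)^2=\|\div\btau_h\|_{0,\O}^2,
\end{equation*}
whence $\div\btau_h=0$ in $\O$. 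In particular $\mathcal{K}_h\subset\mathcal{K}$.

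Next I would observe that, for every $\btau_h\in\mathcal{K}_h$, the $\H(\div,\O)$--norm collapses to the $\L^2$--norm,
\begin{equation*}
\|\btau_h\|_{\bV}^2=\|\btau_h\|_{0,\O}^2+\|\div\btau_h\|_{0,\O}^2=\|\btau_h\|_{0,\O}^2,
\end{equation*}
and then invoke the lower stability bound \eqref{consistencia2}. Summing over the elements and using the identity above yields
\begin{equation*}
a_h(\btau_h,\btau_h)=\sum_{\E\in\CT_h}a_h^{\E}(\btau_h,\btau_h)\ge\alpha_*\sum_{\E\in\CT_h}\int_{\E}\btau_h\cdot\btau_h=\alpha_*\|\btau_h\|_{0,\O}^2=\alpha_*\|\btau_h\|_{\bV}^2,
\end{equation*}
so the asserted estimate holds with $\alpha=\alpha_*$. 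This constant is independent of $h$ precisely because, by \eqref{consistencia2}, $\alpha_*$ depends only on the mesh regularity constant $C_{\CT}$ from $\mathbf{A_1}$ and $\mathbf{A_2}$.

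I do not anticipate a genuine obstacle here: unlike the continuous ellipticity, which holds on $\mathcal{K}$ thanks to the structure of $a(\cdot,\cdot)$, the discrete estimate does not require any inf--sup or duality machinery. The whole argument rests on the single structural fact $\div\mathcal{V}_h\subset\mathcal{Q}_h$, which forces the discrete kernel to be exactly divergence--free and thereby reduces the stabilized form $a_h$ to a genuine $\L^2$ inner product on $\mathcal{K}_h$. The only point requiring a little care is to verify that $\div\btau_h$ is indeed a legitimate element of $\mathcal{Q}_h$, i.e.\ that the inclusion $\div\btau_h\in\bbP_k(\E)$ built into the local space matches the polynomial degree defining $\mathcal{Q}_h$; once this is granted, uniform ellipticity follows directly from the element--wise stability bound.
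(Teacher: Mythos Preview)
Your argument is correct and follows essentially the same route as the paper: both rely on the elementwise stability bound \eqref{consistencia2} together with the fact that $\mathcal{K}_h\subset\mathcal{K}$, which you make explicit via the inclusion $\div\mathcal{V}_h\subset\mathcal{Q}_h$. The paper's proof is a one--line invocation of \eqref{consistencia2} with constant $\alpha=\min\{\alpha_*,1\}$; your version spells out the divergence--free structure of $\mathcal{K}_h$ and thereby obtains the (slightly sharper) constant $\alpha=\alpha_*$.
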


\begin{proof}
Thanks to \eqref{consistencia2}, the above inequality holds with
$\alpha:=\min\left\{\alpha_{*},1\right\}$.
\end{proof}

Also, the following discrete inf-sup condition holds.
\begin{lemma}
\label{lmm:disc_infsup}
There exists $\widehat{\beta}>0$, independent of $h$, such that
 \begin{equation*}
 \displaystyle\sup_{\boldsymbol{0}\neq\btau_h\in\mathcal{V}_h}\frac{b(\btau_h,v_h)}{\|\btau_h\|_{\mathcal{V}_h}}\geq\widehat{\beta}\|v_h\|_{\mathcal{Q}_h}\qquad\forall v_h\in \mathcal{Q}_h.
 \end{equation*}
 \end{lemma}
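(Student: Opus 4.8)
The plan is to prove the discrete inf-sup condition by a Fortin-type argument: I would transfer the continuous inf-sup condition \eqref{cont-infsup} to the discrete level through an interpolation operator onto $\mathcal{V}_h$ that commutes with the divergence. The key structural fact is that $\div$ maps $\mathcal{V}_h$ onto $\mathcal{Q}_h$ with an $h$-uniformly bounded right inverse. First I would fix $v_h\in\mathcal{Q}_h\subset\mathcal{Q}$ and note that $\|v_h\|_{\mathcal{Q}_h}=\|v_h\|_{0,\O}$. To manufacture a good test field, I would let $(\widetilde{\bsig},\widetilde{u})$ solve the source problem \eqref{eq:source} with datum $f=-v_h$ and set $\btau:=\widetilde{\bsig}$. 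By \eqref{eq:fuerte} one has $\div\btau=v_h$ in $\O$, and Lemma~\ref{additional_source} provides the crucial extra regularity
\[
\btau\in[\H^r(\O)]^2,\qquad \|\btau\|_{r,\O}\leq C\|v_h\|_{0,\O},\qquad r>1/2 .
\]

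The second step is the interpolation. Because $\btau\in[\H^r(\O)]^2$ with $r>1/2$, its normal traces $\btau\cdot\bn$ lie in $\L^2(e)$ on each edge, so the degrees of freedom \eqref{freedom}--\eqref{freedom2} can be evaluated on $\btau$ and the VEM interpolant $\Pi_h\btau\in\mathcal{V}_h$ (determined by matching those degrees of freedom) is well defined. I would then establish the commuting property: for every $\E\in\CT_h$ and every $q\in\bbP_k(\E)$,
\[
\int_{\E}\div(\Pi_h\btau)\,q=-\int_{\E}\Pi_h\btau\cdot\nabla q+\int_{\partial\E}(\Pi_h\btau\cdot\bn)q=-\int_{\E}\btau\cdot\nabla q+\int_{\partial\E}(\btau\cdot\bn)q=\int_{\E}\div\btau\,q,
\]
where the middle identity uses the edge degrees of freedom \eqref{freedom} with test polynomial $q|_e\in\bbP_k(e)$ and the interior degrees of freedom \eqref{freedom2} (observing that $\nabla q$ annihilates constants). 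Since $\div(\Pi_h\btau)\in\bbP_k(\E)$, this identifies $\div(\Pi_h\btau)$ with the $\L^2(\O)$-projection of $\div\btau=v_h$ onto $\mathcal{Q}_h$; as $v_h\in\mathcal{Q}_h$ already, I conclude $\div(\Pi_h\btau)=v_h$.

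Finally, the standard VEM interpolation estimates (as in the references used to introduce $\mathcal{V}_h$, e.g. \cite{ultimo,BBMR2015}, and resting on the mesh assumptions $\mathbf{A_1}$--$\mathbf{A_2}$) give $\|\Pi_h\btau\|_{0,\O}\leq C\|\btau\|_{r,\O}$; combining this with $\|\div(\Pi_h\btau)\|_{0,\O}=\|v_h\|_{0,\O}$ and the regularity bound above yields $\|\Pi_h\btau\|_{\mathcal{V}}\leq C\|v_h\|_{0,\O}$ with $C$ independent of $h$. Choosing $\Pi_h\btau$ as competitor in the supremum then gives
\[
\sup_{\boldsymbol{0}\neq\btau_h\in\mathcal{V}_h}\frac{b(\btau_h,v_h)}{\|\btau_h\|_{\mathcal{V}_h}}\geq\frac{b(\Pi_h\btau,v_h)}{\|\Pi_h\btau\|_{\mathcal{V}}}=\frac{\int_{\O}v_h\,\div(\Pi_h\btau)}{\|\Pi_h\btau\|_{\mathcal{V}}}=\frac{\|v_h\|_{0,\O}^2}{\|\Pi_h\btau\|_{\mathcal{V}}}\geq\frac{1}{C}\|v_h\|_{0,\O},
\]
so the assertion holds with $\widehat{\beta}=1/C$.

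The hard part will be the interpolation operator itself: a generic $\btau\in\H(\div,\O)$ has normal trace only in $\H^{-1/2}(\partial\E)$, so the edge degrees of freedom \eqref{freedom} are \emph{not} defined on all of $\mathcal{V}$. This is exactly why I route the argument through the regularity lifting $\btau=\widetilde{\bsig}$, which lands in $[\H^r(\O)]^2$ and restores $\L^2$ normal traces. The commuting-divergence identity is the engine of the proof, and verifying it together with the $h$-uniform stability of $\Pi_h$ is the technical heart; everything else is a direct consequence of the continuous surjectivity of the divergence encoded in \eqref{cont-infsup}.
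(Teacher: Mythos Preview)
Your proposal is correct and follows essentially the same route the paper points to: the paper's proof simply defers to \cite[Lemma~5.3]{CG}, whose argument is precisely the Fortin construction you outline---lift $v_h$ to an $\H^r$ vector field with divergence $v_h$ (via elliptic regularity), apply the VEM interpolant, and use the commuting-divergence identity (stated in the paper as Lemma~\ref{lemmainter}) together with the interpolation stability of Lemma~\ref{lemmainterV_I}. One minor remark: your bound $\|\Pi_h\btau\|_{0,\O}\leq C\|\btau\|_{r,\O}$ actually requires also the term $\|\div\btau\|_{0,\O}=\|v_h\|_{0,\O}$ from Lemma~\ref{lemmainterV_I} in the non-smooth range $1/2<r\le1$, but this is harmless since that term is already controlled by $\|v_h\|_{0,\O}$.
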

 \begin{proof}
The proof is straightforward by adapting the arguments of \cite[Lemma 5.3]{CG}.
 \end{proof}

The next step is to introduce the discrete version of  the operator
$\bT$:
\begin{align*}
\nonumber \bT_h:\;\mQ_h & \longrightarrow\mQ_h,
\\
f_h & \longmapsto\bT_h f_h:=\widetilde{u}_h,\label{eq:opTh}
\end{align*}
where $(\widetilde{\bsig}_h, \widetilde{u}_h)\in\bV_h\times\mQ_h$ is the solution of the corresponding discrete
source problem:
\begin{equation}
\label{eq:source_disc}
\left\{
\begin{array}{rcc}
a_h(\widetilde{\bsig}_h,\btau_h)+b(\btau_h, \widetilde{u}_h)&=0 \qquad\qquad\,\,\forall \btau_h\in\mathcal{V}_h,\\
-b(\widetilde{\bsig}_h,v_h)&=(f_h,v_h)_{\mQ}\qquad\forall v_h\in \mathcal{Q}_h.
\end{array}\right.
\end{equation}

%
%

In what follows, we state some auxiliar results about the approximation properties
of this interpolant (see \cite{BMRR17}). The first
one concerns approximation properties of $\div\bv_I$ and follows from a
commuting diagram property for this interpolant, which involves the
$\LO$-orthogonal projection 
\begin{equation*}
\label{eq:L2project}
P_k:\;\LO\longrightarrow
\left\{q\in\LO: q|_\E\in\bbP_k(\E)\quad\forall\, \E\in\CT_h\right\}.
\end{equation*}
For $P_k$ we have the following approximation estimate (see \cite{BBMR2015}): if $0\leq s \leq k+1$,  it holds
\begin{equation}
\label{eq:salim}
\|v-P_k(v)\|_{0,\O}\leq Ch^{s}\|v\|_{s,\O}\qquad\forall s\in\H^s(\O)\cap\mQ.
\end{equation}

\begin{lemma}
\label{lemmainter}
Let $\btau\in\bV$ be such that $\btau\in[\HtO]^2$ with $t>1/2 $. Let
$\btau_I\in\bV_h$ be its interpolant defined by \eqref{freedom}--\eqref{freedom2}.
Then,
$$
\div\btau_I=P_k(\div\btau)\quad\text{ in }\O.
$$
Consequently, for all $\E\in\CT_h$,
$\left\|\div\btau_I\right\|_{0,\E}\leq\left\|\div\btau\right\|_{0,\E}$ and,
if $\div\btau|_{\E}\in\HrE$ with $r\geq 0$, then
\begin{equation*}
\left\|\div\btau-\div\btau_I\right\|_{0,\E}
\leq Ch_\E^{\min\{r,k+1\}}\left|\div\btau\right|_{r,\E}. 
\end{equation*}
\end{lemma}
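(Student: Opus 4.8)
The plan is to reduce the identity $\div\btau_I=P_k(\div\btau)$ to a purely local statement, since both sides are defined elementwise, and then to verify it on a fixed polygon $\E\in\CT_h$ by matching the two sets of degrees of freedom through integration by parts. By construction of $\mathcal{V}_h^\E$ one has $\div\btau_I|_\E\in\bbP_k(\E)$, hence $P_k(\div\btau_I)=\div\btau_I$ on $\E$; it therefore suffices to prove $P_k(\div\btau_I)=P_k(\div\btau)$, that is,
$$
\int_\E\div\btau_I\,q=\int_\E\div\btau\,q\qquad\forall q\in\bbP_k(\E).
$$

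First I would integrate by parts on the left-hand side, writing, for $q\in\bbP_k(\E)$,
$$
\int_\E\div\btau_I\,q=-\int_\E\btau_I\cdot\nabla q+\sum_{e\subset\partial\E}\int_e(\btau_I\cdot\bn)\,q\ds,
$$
and likewise for $\btau$. The boundary contributions coincide because $q|_e\in\bbP_k(e)$ and the edge degrees of freedom \eqref{freedom} are shared by $\btau$ and $\btau_I$, so $\int_e(\btau_I\cdot\bn)q\ds=\int_e(\btau\cdot\bn)q\ds$ on every edge. The volume contributions coincide because $\nabla q$ depends on $q$ only through its class in $\bbP_k(\E)/\bbP_0(\E)$, which is exactly the set of test functions in the internal degrees of freedom \eqref{freedom2}; hence $\int_\E\btau_I\cdot\nabla q=\int_\E\btau\cdot\nabla q$. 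Combining the two matches yields the displayed identity for all $q\in\bbP_k(\E)$, and therefore $\div\btau_I=P_k(\div\btau)$ on each $\E$, i.e. in $\O$.

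With the commuting identity in hand, the two stated consequences follow at once. The bound $\left\|\div\btau_I\right\|_{0,\E}\le\left\|\div\btau\right\|_{0,\E}$ is just the $\L^2$-contractivity of the orthogonal projector $P_k$ applied to $\div\btau|_\E$. For the convergence estimate one writes $\div\btau-\div\btau_I=\div\btau-P_k(\div\btau)$ and invokes the standard polynomial approximation property of $P_k$ on $\E$ (cf. \eqref{eq:salim}); when $\div\btau|_\E\in\HrE$ with $r\ge 0$ this gives $\left\|\div\btau-\div\btau_I\right\|_{0,\E}\le Ch_\E^{\min\{r,k+1\}}\left|\div\btau\right|_{r,\E}$, with $C$ depending only on $k$ and the shape-regularity constant $C_{\CT}$.

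The only genuinely delicate point is ensuring that the objects manipulated above are well defined: the interpolant $\btau_I$ and the edge integrals $\int_e(\btau\cdot\bn)q\ds$ require a meaningful normal trace of $\btau$ on each edge. This is precisely where the extra regularity $\btau\in[\HtO]^2$ with $t>1/2$ enters, since then the trace theorem places $\btau\cdot\bn$ in $\L^2(e)$ (indeed in $\H^{t-1/2}(e)$), so the degrees of freedom \eqref{freedom}--\eqref{freedom2} determine $\btau_I$ uniquely and the boundary terms arising from integration by parts are legitimate; an $\H(\div;\O)$ hypothesis alone, giving only $\btau\cdot\bn\in\H^{-1/2}(\partial\E)$, would not suffice. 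Beyond this, the remaining steps are the routine verification that $\nabla(\bbP_k(\E))$ matches the internal degrees of freedom and standard polynomial approximation estimates.
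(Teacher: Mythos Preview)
Your argument is correct and complete. The paper itself does not give a proof here; it simply writes ``See \cite[Appendix]{BMRR17}'', deferring to the acoustic-VEM paper where this commuting-diagram property is established. What you have written is exactly the standard argument one finds there: integrate by parts on $\E$, match the boundary terms via the edge degrees of freedom \eqref{freedom}, match the volume terms via the internal moments \eqref{freedom2}, and conclude that $\div\btau_I$ and $\div\btau$ have the same $\L^2$-projection onto $\bbP_k(\E)$. Your remarks on the role of the hypothesis $t>1/2$ (to give an $\L^2$ normal trace on edges so that the interpolant and the boundary integrals make sense) and on deducing the two corollaries from $\L^2$-contractivity and polynomial approximation of $P_k$ are also correct.
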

\begin{proof}
See \cite[Appendix]{BMRR17}.
\end{proof}
The second result concerns the $\LO$ approximation property of $\btau_I$.

\begin{lemma}
\label{lemmainterV_I}
Let $\btau\in\bV$ be such that $\btau\in[\HtO]^2$ with $t>1/2$. Let
$\btau_I\in\bV_h$ be its interpolant defined by \eqref{freedom}--\eqref{freedom2}.
Let $\E\in\CT_h$. If $1\leq t\leq k+1$, then
\begin{equation*}
\left\|\btau-\btau_I\right\|_{0,\E}
\leq Ch_\E^{t}\left|\btau\right|_{t,\E},
\end{equation*}
whereas, if $1/2 <t\leq1$, then
\begin{equation*}
\left\|\btau-\btau_I\right\|_{0,\E}
\leq C\left(h_\E^{t}\left|\btau\right|_{t,\E}
+h_\E\left\|\div\btau\right\|_{0,\E}\right).
\end{equation*}
\end{lemma}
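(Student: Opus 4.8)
The plan is to compare $\btau_I$ with a polynomial that the interpolation operator reproduces and then to reduce the estimate to a local stability bound. Since the operator defined by \eqref{freedom}--\eqref{freedom2} is linear and fixes every element of $\bV_h^\E$, for any $\btau_\pi\in\widehat{\bV}_h^\E=\nabla(\bbP_{k+1}(\E))\subset\bV_h^\E$ one has $(\btau_\pi)_I=\btau_\pi$, whence $\btau-\btau_I=(\btau-\btau_\pi)-(\btau-\btau_\pi)_I$. By the triangle inequality it then suffices to control $\|\btau-\btau_\pi\|_{0,\E}$ through classical polynomial approximation on the star--shaped element $\E$ (Bramble--Hilbert under $\mathbf{A_2}$) and to prove the local stability estimate
\[
\|\bw_I\|_{0,\E}\le C\left(\|\bw\|_{0,\E}+h_\E^{t}\,|\bw|_{t,\E}+h_\E\,\|\div\bw\|_{0,\E}\right),
\qquad \bw\in\bV\cap[\HtE]^2,\ t>\tfrac12 .
\]

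To establish this estimate I would exploit that $\bw_I\in\bV_h^\E$ is curl--free, so that on the simply connected $\E$ one may write $\bw_I=\nabla\psi$ with $\psi$ of zero mean. Integrating by parts,
\[
\|\bw_I\|_{0,\E}^2=\int_\E\nabla\psi\cdot\bw_I=-\int_\E\psi\,\div\bw_I+\int_{\partial\E}\psi\,(\bw_I\cdot\bn),
\]
and bounding $\|\psi\|_{0,\E}\le Ch_\E\|\bw_I\|_{0,\E}$ by Poincar\'e and $\|\psi\|_{0,\partial\E}\le Ch_\E^{1/2}\|\bw_I\|_{0,\E}$ by a scaled trace inequality (both valid under $\mathbf{A_1}$--$\mathbf{A_2}$) yields, after dividing by $\|\bw_I\|_{0,\E}$,
\[
\|\bw_I\|_{0,\E}\le C\left(h_\E\,\|\div\bw_I\|_{0,\E}+h_\E^{1/2}\,\|\bw_I\cdot\bn\|_{0,\partial\E}\right).
\]
Now $\div\bw_I=P_k(\div\bw)$ by the commuting property of Lemma~\ref{lemmainter}, so $\|\div\bw_I\|_{0,\E}\le\|\div\bw\|_{0,\E}$; and since $\bw_I\cdot\bn$ is the edgewise $\L^2$--projection of $\bw\cdot\bn$, the scaled trace inequality $\|\bw\cdot\bn\|_{0,\partial\E}\le C(h_\E^{-1/2}\|\bw\|_{0,\E}+h_\E^{t-1/2}|\bw|_{t,\E})$, valid for $t>1/2$, produces the remaining two terms and closes the stability estimate.

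With the stability bound applied to $\bw=\btau-\btau_\pi$, the two regimes differ only in how the divergence term is treated, and this is where the statement splits. For $1\le t\le k+1$ the divergence $\div\btau$ lies in $\H^{t-1}(\E)$ with $t-1\ge0$, so it too can be polynomially approximated, $\|\div(\btau-\btau_\pi)\|_{0,\E}\le Ch_\E^{t-1}|\btau|_{t,\E}$, and the term $h_\E\|\div(\btau-\btau_\pi)\|_{0,\E}$ is absorbed into $Ch_\E^{t}|\btau|_{t,\E}$, leaving the first, clean bound. For $1/2<t\le1$ one has $t-1\le0$ and no such approximation rate is available; taking $\btau_\pi$ of minimal degree so that $\div\btau_\pi=0$ forces $\div(\btau-\btau_\pi)=\div\btau$, and the contribution $h_\E\|\div\btau\|_{0,\E}$ must remain, giving the second bound. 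The main obstacle throughout is the stability estimate itself: because the functions of $\bV_h^\E$ are virtual rather than polynomial, one cannot pass to a fixed reference element, and the $\L^2$ control of $\bw_I$ must be extracted directly on the polygon via the curl--free potential and the scaled trace machinery above; once this is in place, the routine approximation and scaling constants are discharged exactly as in \cite{BMRR17,BBMR2015}.
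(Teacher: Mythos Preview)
The paper gives no proof of its own here; it simply refers to the Appendix of \cite{BMRR17}, and your sketch is precisely the argument carried out there: use that every element of $\bV_h^\E$ is curl--free to write $\bw_I=\nabla\psi$, integrate by parts, and combine scaled Poincar\'e and trace inequalities with the commuting identity $\div\btau_I=P_k(\div\btau)$ and the fact that $\btau_I\cdot\bn$ is the edgewise $\L^2$--projection of $\btau\cdot\bn$. So your approach and the paper's (cited) proof coincide.

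One point deserves care. In the regime $1\le t\le k+1$ you need $\btau_\pi\in\widehat{\bV}_h^\E=\nabla\bbP_{k+1}(\E)$ with both $\|\btau-\btau_\pi\|_{0,\E}\le Ch_\E^{t}|\btau|_{t,\E}$ and $\|\div(\btau-\btau_\pi)\|_{0,\E}\le Ch_\E^{t-1}|\btau|_{t,\E}$. For a general $\btau$ this is impossible: take $\btau=(-y,x)$, which has $|\btau|_{t,\E}=0$ for every $t\ge2$ but is not a gradient; since $\rot\btau_I=0$ one has $\btau\ne\btau_I$, and the first inequality of the lemma itself would force $\btau=\btau_I$. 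The hidden hypothesis---present in the way the result is actually used both in \cite{BMRR17} and in this paper---is $\rot\btau=0$ (here one always interpolates $\widetilde{\bsig}=\nabla\widetilde u$). Under that hypothesis you write $\btau=\nabla u$, take $\btau_\pi=\nabla q$ with $q$ an averaged Taylor polynomial of $u$ of degree $k+1$, and Bramble--Hilbert on the star--shaped $\E$ yields simultaneously $\|\btau-\btau_\pi\|_{0,\E}\le Ch_\E^{t}|u|_{t+1,\E}$ and $\|\div(\btau-\btau_\pi)\|_{0,\E}=\|\Delta(u-q)\|_{0,\E}\le Ch_\E^{t-1}|u|_{t+1,\E}$, closing your argument exactly as you describe.
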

\begin{proof}
See \cite[Appendix]{BMRR17}.
\end{proof}
The end this section by recalling the following technical result.
\begin{lemma}
\label{Acustic}
There exists a constant $C>0$ such that, for every $p\in\HutO$ with
$1/2<t\le k+1$, there holds 
$$
\left\|\nabla p-\bpi_h (\nabla p)\right\|_{0,\O}
\leq Ch^{t}\left\|\nabla p\right\|_{t,\O},
$$
where  $\left(\bpi_h\bv\right)|_\E
:=\bpi_h^\E(\bv|_\E)
\quad\text{ for all }\E\in\CT_h.
$
\end{lemma}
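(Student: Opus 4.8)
The plan is to localize the estimate element by element, exploit the best-approximation property of the orthogonal projector $\bpi_h^\E$, and then invoke a polynomial approximation result of Bramble--Hilbert/Dupont--Scott type on each polygon.

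First I would fix $\E\in\CT_h$ and recall that, by definition, $\bpi_h^\E(\nabla p)$ is the $[\L^2(\E)]^2$-orthogonal projection of $\nabla p$ onto $\widehat{\bV}_h^\E=\nabla(\bbP_{k+1}(\E))$. Since the projection is the closest point of the subspace, for every $q\in\bbP_{k+1}(\E)$ we have $\nabla q\in\widehat{\bV}_h^\E$ and hence
$$\|\nabla p-\bpi_h^\E(\nabla p)\|_{0,\E}\le\|\nabla p-\nabla q\|_{0,\E}=|p-q|_{1,\E}.$$
The second step is to choose $q$ to be a suitable degree-$(k+1)$ polynomial approximant of $p$ — for instance the averaged Taylor polynomial of Dupont--Scott. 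Because $\E$ is star-shaped with respect to a ball of radius proportional to $h_\E$ (assumption $\mathbf{A_2}$), the chunkiness parameter of $\E$ is uniformly bounded in terms of $C_\CT$, so the approximation constant is uniform across the mesh family; for $1/2<t\le k+1$ this yields
$$|p-q|_{1,\E}\le C\,h_\E^{t}\,|p|_{1+t,\E}=C\,h_\E^{t}\,|\nabla p|_{t,\E}.$$

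Finally I would sum the squares over all $\E\in\CT_h$, bound $h_\E\le h$, and use the subadditivity of the Sobolev--Slobodeckij seminorm over the partition to obtain
$$\|\nabla p-\bpi_h(\nabla p)\|_{0,\O}^2=\sum_{\E\in\CT_h}\|\nabla p-\bpi_h^\E(\nabla p)\|_{0,\E}^2\le C\,h^{2t}\sum_{\E\in\CT_h}|\nabla p|_{t,\E}^2\le C\,h^{2t}\,\|\nabla p\|_{t,\O}^2,$$
and taking square roots gives the claim. Here I replace the global seminorm by the full norm $\|\nabla p\|_{t,\O}$ appearing in the statement, which only weakens the bound.

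The main obstacle is the fractional regularity range $1/2<t\le k+1$: for non-integer $t$ the estimate cannot simply be read off the classical integer-order Bramble--Hilbert lemma, and one must either appeal directly to the fractional polynomial approximation theory of Dupont--Scott on star-shaped domains or establish the integer endpoints $t=1$ and $t=k+1$ and interpolate. Two subsidiary points require care: ensuring that the constant $C$ depends only on $C_\CT$ and not on the individual element geometries (this is exactly what $\mathbf{A_1}$ and $\mathbf{A_2}$ provide through the uniform chunkiness parameter), and justifying the subadditivity $\sum_{\E}|\nabla p|_{t,\E}^2\le|\nabla p|_{t,\O}^2$, which holds because restricting the Gagliardo double integral to $\E\times\E$ can only decrease it.
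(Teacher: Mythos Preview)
Your argument is correct and is precisely the standard route to this estimate: best approximation by the $L^2$-orthogonal projector onto $\nabla(\bbP_{k+1}(\E))$, followed by a fractional Bramble--Hilbert/Dupont--Scott bound on star-shaped polygons, then summation using the subadditivity of the Sobolev--Slobodeckij seminorm. The paper itself does not give a proof but simply cites \cite[Lemma~8]{BMRR17}; the argument there follows the same pattern you outline, so there is nothing to contrast.
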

\begin{proof}
See \cite[Lemma 8]{BMRR17}.
\end{proof}

\setcounter{equation}{0}
\section{Spectral approximation}
\label{SEC:approximation}
In what follows, we will prove that convergence properties for the numerical method proposed in Section \ref{SEC:DISCRETE}. 
We begin this section by recalling some definitions of spectral theory. 

Let $\mathcal{X}$ be a generic Hilbert space and let $\bS$ be a linear bounded operator defined by $\boldsymbol{S}:\mathcal{X}\rightarrow\mathcal{X}$. If $\boldsymbol{I}$ represents the identity operator, the spectrum of $\boldsymbol{S}$ is defined by $\sp(\boldsymbol{S}):=\{z\in\mathbb{C}:\,\,(z\boldsymbol{I}-\boldsymbol{S})\,\,\text{is not invertible} \}$ and the resolvent is its complement $\rho(\bS):=\mathbb{C}\setminus\sp(\boldsymbol{S})$. For any $z\in\rho(\boldsymbol{S})$, we define the resolvent operator of $\boldsymbol{S}$ corresponding to $z$ by $R_z(\boldsymbol{S}):=(z\boldsymbol{I}-\boldsymbol{S})^{-1}:\mathcal{X}\rightarrow\mathcal{X}$.

Despite to the fact that $\bT$ is compact, since the discrete solution operator is defined from 
$\mQ_h$ onto itself,  the non-compact theory of \cite{DNR1} is suitable for this setting.

We introduce the following definition
\begin{equation*}
\displaystyle \|\bT\|_h:=\sup_{0\neq f_h\in\mQ_h}\frac{\|\bT f_h\|_{\mQ}}{\|f_h\|_{\mQ}}.
\end{equation*}

No we recall properties P1 and P2 of \cite{DNR1}.

\begin{itemize}
\item P1: $\|\bT-\bT_h\|_h\rightarrow 0$ as $h\rightarrow 0$;
\item P2: $\forall\btau\in\mQ$, $\displaystyle \inf_{\btau_h\in\mQ_h}\|\btau-\btau_h\|_{\mQ}\rightarrow 0$ as $h\rightarrow 0$.
\end{itemize}

Our task consists into prove properties P1 and P2 in order to ensure the spectral convergence. We observe that P2 is an immediate consequence from the fact that the smooth functions are dense in $\mQ$. Hence, only remains to prove property P1.

\begin{lemma}
\label{lemcotste}
There exists $C>0$ such that, for all $f_{h}\in\mQ_h$, if $\widetilde{u}=Tf_{h}$ and
$\widetilde{u}_h=T_hf_{h}$, then
$$
\left\|\left(T-T_h\right)f_{h}\right\|_{h}
=\left\|\widetilde{u}-\widetilde{u}_h\right\|_{h}\le C h^{r}.
$$
\end{lemma}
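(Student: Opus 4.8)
The plan is to prove the stated $L^2(\O)$ estimate in two stages: first an estimate for the flux error $\widetilde\bsig-\widetilde\bsig_h$ in the $\bV$-norm, obtained from the uniform ellipticity on the discrete kernel, and then an estimate for the scalar error $\widetilde u-\widetilde u_h$ obtained through the discrete inf-sup condition. Here $(\widetilde\bsig,\widetilde u)$ and $(\widetilde\bsig_h,\widetilde u_h)$ solve the source problems \eqref{eq:source} and \eqref{eq:source_disc} with data $f_h\in\mQ_h$, and I read the right-hand side as $Ch^{r}\|f_h\|_{\mQ}$, so that after dividing by $\|f_h\|_{\mQ}$ property P1 follows. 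The starting observation is structural: introducing the interpolant $\widetilde\bsig_I\in\bV_h$ of $\widetilde\bsig$ from \eqref{freedom}--\eqref{freedom2} (legitimate since $\widetilde\bsig=\nabla\widetilde u\in[\HrE]^2$, $r>1/2$, by Lemma~\ref{additional_source}), the commuting diagram property of Lemma~\ref{lemmainter} gives $\div\widetilde\bsig_I=P_k(\div\widetilde\bsig)=P_k(-f_h)=-f_h$, because $f_h\in\mQ_h$ is already piecewise polynomial; meanwhile the second equation of \eqref{eq:source_disc} forces $\div\widetilde\bsig_h=-f_h$. Hence $\delta_h:=\widetilde\bsig_h-\widetilde\bsig_I$ is divergence-free, so $\delta_h\in\mathcal{K}_h$ and $\|\delta_h\|_{\bV}=\|\delta_h\|_{0,\O}$.

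Next I would estimate $\delta_h$ using Lemma~\ref{ha-elipt-disc}. Since $\div\delta_h=0$, the couplings $b(\delta_h,\widetilde u)$ and $b(\delta_h,\widetilde u_h)$ vanish, so the first equations of \eqref{eq:source} and \eqref{eq:source_disc} give $a(\widetilde\bsig,\delta_h)=0$ and $a_h(\widetilde\bsig_h,\delta_h)=0$. Therefore $\alpha\|\delta_h\|_{\bV}^2\le a_h(\delta_h,\delta_h)=a(\widetilde\bsig,\delta_h)-a_h(\widetilde\bsig_I,\delta_h)$. To bound the right-hand side I would insert the projector elementwise, writing $a_h^\E(\widetilde\bsig_I,\delta_h)=a_h^\E(\widetilde\bsig_I-\bpi_h^\E\widetilde\bsig,\delta_h)+\int_\E\bpi_h^\E\widetilde\bsig\cdot\delta_h$, the last identity being the consistency property of $a_h^\E$ (note $\bpi_h^\E\widetilde\bsig\in\widehat\bV_h^\E$). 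Subtracting term by term yields $a(\widetilde\bsig,\delta_h)-a_h(\widetilde\bsig_I,\delta_h)=\sum_\E\int_\E(\widetilde\bsig-\bpi_h^\E\widetilde\bsig)\cdot\delta_h-\sum_\E a_h^\E(\widetilde\bsig_I-\bpi_h^\E\widetilde\bsig,\delta_h)$. The first sum is bounded by $\|\widetilde\bsig-\bpi_h\widetilde\bsig\|_{0,\O}\|\delta_h\|_{0,\O}$ and controlled with Lemma~\ref{Acustic} (taking $p=\widetilde u$, $t=r$); the second, by the stability bound \eqref{consistencia2}, is bounded by $\|\widetilde\bsig_I-\bpi_h\widetilde\bsig\|_{0,\O}\|\delta_h\|_{0,\O}$, where $\|\widetilde\bsig_I-\bpi_h\widetilde\bsig\|_{0,\O}$ is handled by a triangle inequality with Lemma~\ref{lemmainterV_I} and Lemma~\ref{Acustic}. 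Using $\div\widetilde\bsig=-f_h$ and Lemma~\ref{additional_source}, all these are $O(h^{r}\|f_h\|_{\mQ})$, and cancelling one factor $\|\delta_h\|_{\bV}$ gives $\|\delta_h\|_{\bV}\le Ch^{r}\|f_h\|_{\mQ}$; a triangle inequality with Lemma~\ref{lemmainterV_I} then yields $\|\widetilde\bsig-\widetilde\bsig_h\|_{0,\O}\le Ch^{r}\|f_h\|_{\mQ}$.

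For the scalar error I would set $u_I:=P_k\widetilde u\in\mQ_h$ and apply the discrete inf-sup condition of Lemma~\ref{lmm:disc_infsup} to $\widetilde u_h-u_I\in\mQ_h$. Since $\div\btau_h\in\mQ_h$ for $\btau_h\in\bV_h$, the definition of $P_k$ gives $b(\btau_h,u_I)=b(\btau_h,\widetilde u)=-a(\widetilde\bsig,\btau_h)$, while \eqref{eq:source_disc} gives $b(\btau_h,\widetilde u_h)=-a_h(\widetilde\bsig_h,\btau_h)$; hence $b(\btau_h,\widetilde u_h-u_I)=a(\widetilde\bsig,\btau_h)-a_h(\widetilde\bsig_h,\btau_h)$. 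This expression is estimated exactly as above, by splitting off $a_h(\widetilde\bsig_I,\btau_h)$, invoking the consistency identity, the flux bound $\|\delta_h\|_{0,\O}\le Ch^{r}\|f_h\|_{\mQ}$, Lemma~\ref{Acustic} and \eqref{consistencia2}, so that $|b(\btau_h,\widetilde u_h-u_I)|\le Ch^{r}\|f_h\|_{\mQ}\|\btau_h\|_{\bV}$. The inf-sup condition then gives $\|\widetilde u_h-u_I\|_{\mQ}\le Ch^{r}\|f_h\|_{\mQ}$, whereas \eqref{eq:salim} gives $\|\widetilde u-u_I\|_{\mQ}\le Ch^{\min\{1+r,k+1\}}\|\widetilde u\|_{1+r,\O}\le Ch^{r}\|f_h\|_{\mQ}$. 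A final triangle inequality delivers $\|\widetilde u-\widetilde u_h\|_{\mQ}\le Ch^{r}\|f_h\|_{\mQ}$, which is the assertion.

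I expect the main obstacle to be the careful bookkeeping of the VEM consistency error, that is, the passage from $a$ to $a_h$ through $\bpi_h^\E$ while keeping every contribution at order $h^{r}$ under the reduced regularity $r>1/2$; in particular one must use the second, low-regularity branch of Lemma~\ref{lemmainterV_I} and check that its extra term $h_\E\|\div\widetilde\bsig\|_{0,\E}$ does not spoil the rate (it is of even higher order because $\div\widetilde\bsig=-f_h$ and $h\le h^{r}$). By contrast, the divergence-free structure of $\delta_h$, which places it in $\mathcal{K}_h$ and annihilates the $b(\cdot,\cdot)$ couplings, makes the flux estimate clean, and the scalar estimate then rides entirely on the discrete inf-sup condition.
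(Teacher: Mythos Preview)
Your proof is correct and follows essentially the same route as the paper: the flux error is controlled by showing $\widetilde\bsig_I-\widetilde\bsig_h\in\mathcal K_h$ via the commuting diagram property and then using the discrete kernel ellipticity together with the consistency/stability of $a_h^\E$ and the projector $\bpi_h^\E$, while the scalar error is recovered from the difference $a(\widetilde\bsig,\btau_h)-a_h(\widetilde\bsig_h,\btau_h)$ after passing through $P_k\widetilde u$. The only cosmetic difference is that for the scalar part the paper invokes the explicit lifting $\widehat\bsig_h$ with $\div\widehat\bsig_h=P_k\widetilde u-\widetilde u_h$ (the construction behind Lemma~\ref{lmm:disc_infsup}), whereas you apply the discrete inf--sup condition directly; these are equivalent.
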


\begin{proof}
Let $f\in\mQ_h$ sucht that $\widetilde{u}=Tf_{h}$, $\widetilde{u}_h=T_hf_{h}$  and $\widetilde{\bsig}_{I}\in \Vh$. From triangular inequality we have,
\begin{equation*}
\|\widetilde{\bsig}-\widetilde{\bsig}_{h}\|_{0,\O}\leq \|\widetilde{\bsig}-\widetilde{\bsig}_{I}\|_{0,\O}+\|\widetilde{\bsig}_{I}-\widetilde{\bsig}_{h}\|_{0,\O}.
\end{equation*}
 We set $\btau_h:=\widetilde{\bsig}_I-\widetilde{\bsig}_h$, thanks to
Lemma~\ref{lemmainter},  equations \eqref{eq:source} and \eqref{eq:source_disc}, we have $\div \widetilde{\bsig}_{I}=P_k(\div \widetilde{\bsig})=f_{h}=\div \widetilde{\bsig}_{h}$, then $\div\btau_{h}=0$. Hence $\btau_{h}\in \mathcal{K}_h\subset \mathcal{K}$. Therefore, we have
\begin{align*}
\alpha\left\|\btau_h\right\|^2_{0,\O}&=\alpha\left\|\btau_h\right\|^2_{\mathcal{V}}
 \leq a_h(\widetilde{\bsig}_I,\btau_h)-a_h(\widetilde{\bsig}_h,\btau_h)= a_h(\widetilde{\bsig}_I,\btau_h)+b(\btau_h,u_{h})\\
& =\sum_{\E\in\CT_h}\left[a_h^\E(\widetilde{\bsig}_I-\bpi_h^\E\widetilde{\bsig},\btau_h)+a^{\E}(\bpi_h^\E\widetilde{\bsig}-\widetilde{\bsig},\btau_h)\right] +a(\widetilde{\bsig},\btau_{h})
\\
& =\sum_{\E\in\CT_h}\left[a_h^\E(\widetilde{\bsig}_I-\bpi_h^\E\widetilde{\bsig},\btau_h)+a^{\E}(\bpi_h^\E\widetilde{\bsig}-\widetilde{\bsig},\btau_h)\right] 
\\
&\leq C\sum_{\E\in\CT_h}\left(\|\widetilde{\bsig}-\widetilde{\bsig}_{I}\|_{0,\E}+\|\widetilde{\bsig}-\bpi_h^\E\widetilde{\bsig}\|_{0,\E}\right)\|\btau_{h}\|_{0,\O}. 
\end{align*}
Therefore, we obtain
\begin{equation}
\label{eq:sigma}
\|\widetilde{\bsig}-\widetilde{\bsig}_{h}\|_{0,\O}\leq C\left(\sum_{\E\in\CT_h}\left(\|\widetilde{\bsig}-\widetilde{\bsig}_{I}\|_{0,\E}+\|\widetilde{\bsig}-\bpi_h^\E\widetilde{\bsig}\|_{0,\E}\right)\right).
\end{equation}
The next step is  to control $\|\left(T-T_h\right)f_{h}\|_{0,\O}$.  Again, using triangle inequality we obtain
\begin{align}
\label{eq:estimateP1}
\|\left(T-T_h\right)f_{h}\|_{0,\O}=\|\widetilde{u}-\widetilde{u}_{h}\|_{0,\O}\leq \|\widetilde{u}-P_{k}(\widetilde{u})\|_{0,\O}+\|P_{k}(\widetilde{u})-\widetilde{u}_{h}\|_{0,\O}.
\end{align}  
Now, adapting the arguments of \cite[Lemma 5.3]{CG}, we deduce that there exists $\widehat{\bsig}_{h}\in \mathcal{V}_{h}$ such that
\begin{align}
\label{eq:GG}
\div\widehat{\bsig}_{h}=P_{k}(\widetilde{u})-\widetilde{u}_{h}\qquad\text{and}\qquad \|\widehat{\bsig}_{h}\|_{\mathcal{V}}\leq c\|P_{k}(\widetilde{u})-\widetilde{u}_{h}\|_{0,\O}.
\end{align}
Hence, 
\begin{align*}
\|P_{k}(\widetilde{u})-\widetilde{u}_{h}\|_{0,\O}^{2}&=\int_{\O}\left(P_{k}(\widetilde{u})-\widetilde{u}_{h}\right)\div\widehat{\bsig}_{h}=\int_{\O}\left(\widetilde{u}-\widetilde{u}_{h}\right)\div\widehat{\bsig}_{h}\\
&=b(\widehat{\bsig}_{h},\widetilde{u})-b(\widehat{\bsig}_{h},\widetilde{u}_{h})=a_{h}(\widetilde{\bsig}_{h},\widehat{\bsig}_{h})-a(\widetilde{\bsig},\widehat{\bsig}_{h})\\
&= \sum_{\E\in\CT_h}\left[a_h^\E(\widetilde{\bsig}_h-\bpi_h^\E\widetilde{\bsig},\widehat{\bsig}_{h})-a^{\E}(\widetilde{\bsig}-\bpi_h^\E\widetilde{\bsig},\widehat{\bsig}_{h})\right] \\
&\leq C \sum_{\E\in\CT_h}\left(\|\widetilde{\bsig}_h-\widetilde{\bsig}\|_{0,\E}+\|\widetilde{\bsig}-\bpi_h^\E\widetilde{\bsig}\|_{0,\E}\right)\|\widehat{\bsig}_{h}\|_{\mathcal{V}} 
\end{align*}
It follows of the above estimate, \eqref{eq:sigma}, \eqref{eq:GG} and \eqref{eq:estimateP1}
\begin{align}
\label{eq: errorT}
\|\left(T-T_h\right)f_{h}\|_{0,\O}&\leq C\left(\|\widetilde{u}-P_{k}(\widetilde{u})\|_{0,\O}+\sum_{\E\in\CT_h}\left(\|\widetilde{\bsig}-\widetilde{\bsig}_{I}\|_{0,\E}+\|\widetilde{\bsig}-\bpi_h^\E\widetilde{\bsig}\|_{0,\E}\right)\right)
\end{align}
Now, we need to estimate the three terms on the right-hand side above.
For the first term, invoking \eqref{eq:salim} we obtain
\begin{equation}
\label{eq:firstT}
\|\widetilde{u}-P_{k}(\widetilde{u})\|_{0,\O}\leq C h_{\E}^{1+r}\|\widetilde{u}\|_{1+r,\O}.
\end{equation}
For the second term, we using Lemma \ref{lemmainter} and \ref{lemmainterV_I}, we have
\begin{equation*}
\label{eq:secondT}
\sum_{\E\in\CT_h}\|\widetilde{\bsig}-\widetilde{\bsig}_{I}\|_{0,\E}\leq C\left(\sum_{\E\in\CT_h}\left(h_{\E}^{r}|\widetilde{\bsig}|_{r,\E}+h_{\E}\|\div\widetilde{\bsig}\|_{0,\E}\right)\right)\leq 
\end{equation*}
Finally for the third term, using $\widetilde{\bsig}=\nabla(\widetilde{u})$ (see \eqref{eq:fuerte}) and Lemma  \ref{Acustic}, we obtain 
\begin{align}
\label{eq:thirdT}
\sum_{\E\in\CT_h}\|\widetilde{\bsig}-\bpi_h^\E\widetilde{\bsig}\|_{0,\E}=\sum_{\E\in\CT_h}\|\nabla (\widetilde{u})-\bpi_h^\E\nabla(\widetilde{u})\|_{0,\E}\leq C\sum_{\E\in\CT_h}h_{\E}^{r}|\widetilde{\bsig}|_{r,\E}
\end{align}

Substituting \eqref{eq:firstT}--\eqref{eq:thirdT} in \eqref{eq: errorT} and using \eqref{eq:bien}, we have 
\begin{align*}
\|\left(T-T_h\right)f_{h}\|_{0,\O}\leq Ch^{r}\left(\|\widetilde{u}\|_{1+r,\O}+\|\widetilde{\bsig}\|_{r,\O}+\|\div\widetilde{\bsig}\|_{0,\O}\right)\leq Ch^{r}\|f_{h}\|_{0,\O}.
\end{align*}
Hence we conclude the proof.
\end{proof}

 As a consequence of P1, we have the following results (see \cite[Lemma 1 and Theorem 1]{DNR1}).
 
 The first of these results establishes that the discrete resolvent is bounded.
 \begin{lemma}
 \label{lmm:resolvent1}
  Assume that P1 hold. Let $F\subset\rho(\bT)$ be closed. Then, there exist
 positive constants $C$ and $h_0$, independent of $h$, such that for $h<h_0$
 \begin{equation*}
 \displaystyle\sup_{v_h\in\mQ_h}\|R_z(\bT_h)v_h\|_{\mQ}\leq C\|v_h\|_{\mQ}\qquad\forall z\in F.
 \end{equation*}
 \end{lemma}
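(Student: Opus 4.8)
The plan is to deduce Lemma~\ref{lmm:resolvent1} directly from property P1 together with the uniform boundedness of the resolvent of the limit operator $\bT$ on the compact set $F$. The starting observation is that since $F\subset\rho(\bT)$ is closed and the eigenvalues of $\bT$ accumulate only at $0$ (Lemma~\ref{lmm:charact}), the map $z\mapsto\|R_z(\bT)\|$ is continuous on the resolvent set, and hence there is a constant $M>0$ with $\|R_z(\bT)\|_{\mathcal{L}(\mQ)}\le M$ for every $z\in F$. This $M$ is the uniform bound we will transfer to the discrete resolvents.

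The main identity I would exploit is the standard second-resolvent manipulation. For $z\in F$ and $v_h\in\mQ_h$, write
\begin{equation*}
(z\bI-\bT_h)
=(z\bI-\bT)\bigl[\bI-R_z(\bT)(\bT-\bT_h)\bigr],
\end{equation*}
understood on $\mQ_h$ after restricting to the discrete space. The operator $\bI-R_z(\bT)(\bT-\bT_h)$ is invertible on $\mQ_h$ as soon as $\|R_z(\bT)(\bT-\bT_h)\|_h<1$, and by P1 together with $\|R_z(\bT)\|\le M$ we have
\begin{equation*}
\|R_z(\bT)(\bT-\bT_h)\|_h
\le M\,\|\bT-\bT_h\|_h\longrightarrow 0
\qquad\text{as }h\to 0,
\end{equation*}
uniformly in $z\in F$. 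Hence there is $h_0>0$, independent of $z$, so that for $h<h_0$ this quantity is bounded by $1/2$, making the Neumann series $\sum_{j\ge 0}\bigl(R_z(\bT)(\bT-\bT_h)\bigr)^j$ convergent with norm at most $2$. Composing with $R_z(\bT)$ then yields $\|R_z(\bT_h)v_h\|_{\mQ}\le 2M\,\|v_h\|_{\mQ}$, which is the claimed estimate with $C:=2M$.

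The one technical point that requires care, and which I expect to be the main obstacle, is that P1 controls only the $\mQ$-norm of $(\bT-\bT_h)f_h$ for discrete arguments $f_h\in\mQ_h$, i.e.\ it is a statement about $\|\cdot\|_h$, whereas the resolvent identity above involves the full operator $\bT$ acting on $R_z(\bT)(\bT-\bT_h)v_h$, whose argument need not lie in $\mQ_h$. I would resolve this by keeping all compositions anchored on $\mQ_h$: since $\bT_h$ maps $\mQ_h$ into $\mQ_h$ and $R_z(\bT)$ is bounded on all of $\mQ$, the composite $R_z(\bT)(\bT-\bT_h)$ is a well-defined bounded operator on $\mQ_h$ whose $\|\cdot\|_h$-norm is controlled exactly by $M\|\bT-\bT_h\|_h$; the Neumann series and the final composition then stay inside the estimate framed by $\|\cdot\|_h$ and $\|\cdot\|_{\mQ}$, which coincide on $\mQ_h$ up to the sup in the definition of $\|\cdot\|_h$. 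Once this bookkeeping is done carefully the uniformity in $z$ is automatic because $M$ and the threshold for the Neumann series depend on $F$ but not on the individual $z$. This is precisely the content invoked from \cite[Lemma 1 and Theorem 1]{DNR1}, so I would cite that reference for the abstract mechanism and supply the short argument above to make the constants explicit.
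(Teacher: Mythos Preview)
The paper does not supply its own proof of this lemma; it simply cites \cite[Lemma~1]{DNR1}. Your overall strategy---transfer the uniform resolvent bound from $\bT$ to $\bT_h$ by a perturbation argument using P1---is exactly the mechanism in that reference. Two points in your write-up, however, need repair.

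First, the factorization has a sign error. Expanding $(z\bI-\bT)\bigl[\bI-R_z(\bT)(\bT-\bT_h)\bigr]$ gives $z\bI-2\bT+\bT_h$, not $z\bI-\bT_h$. The correct identity is
\begin{equation*}
(z\bI-\bT_h)=(z\bI-\bT)\bigl[\bI+R_z(\bT)(\bT-\bT_h)\bigr].
\end{equation*}
This is cosmetic for the norm estimate, but should be fixed.

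Second, and this is a genuine gap, the Neumann-series step does not go through as written. The composite $R_z(\bT)(\bT-\bT_h)$ maps $\mQ_h$ into $\mQ$, not back into $\mQ_h$; since $\bT_h$ is defined only on $\mQ_h$, the second iterate $\bigl(R_z(\bT)(\bT-\bT_h)\bigr)^2$ is not even well defined. Your proposed resolution (``keeping all compositions anchored on $\mQ_h$'') does not address this: after one application you have already left $\mQ_h$, and P1 gives you no control of $\bT-\bT_h$ there. The clean fix, which is essentially what \cite{DNR1} does, is to replace the Neumann series by a direct lower bound: for $v_h\in\mQ_h$,
\begin{equation*}
\|(z\bI-\bT_h)v_h\|_{\mQ}
\ge\|(z\bI-\bT)v_h\|_{\mQ}-\|(\bT-\bT_h)v_h\|_{\mQ}
\ge\bigl(M^{-1}-\|\bT-\bT_h\|_h\bigr)\|v_h\|_{\mQ},
\end{equation*}
which for $h<h_0$ yields injectivity of $z\bI-\bT_h$ on the finite-dimensional space $\mQ_h$, hence bijectivity, together with $\|R_z(\bT_h)v_h\|_{\mQ}\le 2M\|v_h\|_{\mQ}$. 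This uses only P1 and involves no iteration.
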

 The following results establishes that the numerical method does not introduce spurious eigenvalues.
 \begin{theorem}
 \label{thm:spurious_free}
 Let $U\subset\mathbb{C}$ be an open set containing $\sp(\bT)$. Then, there exists $h_0>0$ such that $\sp(\bT_h)\subset U$ for all $h<h_0$.
 \end{theorem}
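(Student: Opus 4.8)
The plan is to obtain Theorem \ref{thm:spurious_free} as a direct consequence of the uniform resolvent bound in Lemma \ref{lmm:resolvent1}, whose standing hypothesis P1 has already been secured through Lemma \ref{lemcotste}. The genuinely analytical work—namely proving $\|\bT-\bT_h\|_h\to0$—is therefore already behind us, and what remains is a short invertibility argument phrased in terms of resolvents.

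First I would set $F:=\mathbb{C}\setminus U$. Since $U$ is open, $F$ is closed; and since $U$ contains $\sp(\bT)$, every $z\in F$ lies in $\rho(\bT)$, so that $F\subset\rho(\bT)$ is a closed set to which Lemma \ref{lmm:resolvent1} is applicable. Invoking that lemma, there exist constants $C>0$ and $h_0>0$, independent of $h$, such that for every $h<h_0$ and every $z\in F$ the discrete resolvent $R_z(\bT_h)$ is bounded, uniformly in $z\in F$. In particular $R_z(\bT_h)=(z\bI-\bT_h)^{-1}$ exists as a bounded operator on $\mQ_h$ for each such $z$, which is precisely the statement that $z\bI-\bT_h$ is invertible, i.e. $z\in\rho(\bT_h)$. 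Hence $F\cap\sp(\bT_h)=\emptyset$ for all $h<h_0$, and passing to complements yields $\sp(\bT_h)\subset U$, as claimed.

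The only point requiring care is the applicability of Lemma \ref{lmm:resolvent1} on the a priori unbounded closed set $F$. If one prefers to argue on a compact set, I would first use P1 to obtain a uniform bound $\|\bT_h\|_h\le M$ for $h$ small—this follows from $\|\bT_h\|_h\le\|\bT\|_h+\|\bT-\bT_h\|_h$ together with the uniform boundedness of $\bT$ and $\|\bT-\bT_h\|_h\to0$—so that $\sp(\bT_h)\subset\overline{D(0,M)}$ for all small $h$. One may then replace $F$ by the compact set $F\cap\overline{D(0,M)}$ and run the identical argument, since any $z\in\sp(\bT_h)$ necessarily lies in $\overline{D(0,M)}$ and, if not in $U$, would then belong to this compact subset of $\rho(\bT)$, contradicting invertibility of $z\bI-\bT_h$.

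I do not expect a real obstacle at this stage: all the difficulty is concentrated in establishing P1 (Lemma \ref{lemcotste}) and in the resolvent estimate of Lemma \ref{lmm:resolvent1}, and the present theorem is essentially a formal corollary of the latter. The main thing to get right is bookkeeping—correctly identifying $F$ with $\rho(\bT)$ via $\sp(\bT)\subset U$, and reading the uniform resolvent bound as the nonexistence of spurious spectrum of $\bT_h$ outside $U$.
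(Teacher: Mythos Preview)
Your argument is correct and matches the paper's treatment: the paper does not prove this theorem at all but simply cites it as \cite[Theorem~1]{DNR1}, once P1 has been established via Lemma~\ref{lemcotste}. Your proposal just spells out the standard resolvent argument behind that citation, including the compactness reduction via the uniform bound on $\|\bT_h\|_h$, so there is nothing to add.
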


As a consequence of the previous results is that the proposed numerical
method does not introduces spurious eigenvalues. Moreover, according to \cite[Section 2]{DNR1} we have the spectral convergence of $\bT_h$ to $\bT$ as $h$ goes to zero. In fact, if $\mu\in (0,1)$ is an isolated eigenvalue of $\bT$ with multiplicity $m$ and $\mathcal{C}$ is an open circle on the complex plane centered at $\mu$ with boundary $\gamma$, we have that $\mu$ is the only eigenvalue of $\bT$ lying in $\mathcal{C}$ and $\gamma\cap\sp(\bT)=\emptyset$. 
Also, invoking \cite[Section 2]{DNR1}, we deduce that for $h$ small enough there exist $m$ eigenvalues $\mu_h^{1},\ldots, \mu_h^{m}$ of $\bT_h$ (according to their respective multiplicities) that lie in $\mathcal{C}$ and hence, the eigenvalues $\mu_h^{i}$, $i=1,\ldots, m$ converge to $\mu$ as $h$ goes to zero. 
\subsection{Error estimates}
\label{SEC:BUCKL:SPEC}

As a direct consequence of Lemma~\ref{lemcotste}, standard results about
spectral approximation (see \cite{K}, for instance) show that isolated
parts of $\sp(T)$ are approximated by isolated parts of $\sp(T_h)$. More
precisely, let $\mu\in(0,1)$ be an isolated eigenvalue of $T$ with
multiplicity $m$ and let $\CE$ be its associated eigenspace. Then, there
exist $m$ eigenvalues $\mu^{(1)}_h,\dots,\mu^{(m)}_h$ of $T_h$ (repeated
according to their respective multiplicities) which converge to $\mu$.
Let $\CE_h$ be the direct sum of their corresponding associated
eigenspaces.

We recall the definition of the \textit{gap} $\hdel$ between two closed
subspaces $\CM$ and $\CN$ of $\L^2(\O)$:
$$
\hdel(\CM,\CN)
:=\max\left\{\delta(\CM,\CN),\delta(\CN,\CM)\right\},
\qquad\text{where}\quad
\delta(\CM,\CN)
:=\sup_{x\in\CM:\ \left\|x\right\|_{\mQ}=1}
\left(\inf_{y\in\CN}\left\|x-y\right\|_{\mQ}\right).
$$

We define
\begin{equation*}
B_h:=\bT_h P_k:\mQ\rightarrow \mQ,
\end{equation*}
such that $B_h$ and $\bT_h$ have the same non-zero eigenvalues and corresponding eigenfunctions.

Let $E:\mQ\rightarrow\mQ$ be the spectral projector of $\bT$ corresponding to the isolated 
eigenvalue $\mu$, namely
\begin{equation*}
\displaystyle E:=\frac{1}{2\pi i}\int_{\gamma} R_{z}(\bT)dz.
\end{equation*}
On the other, we define $F_h:\mQ\rightarrow\mQ$ as the spectral projector of $\bT_h$ corresponding to the isolated 
eigenvalue $\mu_h$, namely
\begin{equation*}
\displaystyle F_h:=\frac{1}{2\pi i}\int_{\gamma} R_{z}(B_h)dz.
\end{equation*}

From \cite[Lemma 1]{DNR1} we have the following result.
\begin{lemma}
\label{lmm:Fbound}
There exist strictly positive constants $h_0$ and $C$ such that
\begin{equation*}
\|R_z(B_h)\|\leq C\qquad\forall h<h_0,\quad\forall z\in\gamma.
\end{equation*}
\end{lemma}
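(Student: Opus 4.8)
The plan is to exploit the fact that the auxiliary operator $B_h=\bT_h P_k$ inherits a simple block structure from the $\LO$-orthogonal projection $P_k$, which reduces the bound to the resolvent estimate already available for $\bT_h$ on $\mQ_h$. First I would decompose $\mQ=\mQ_h\oplus\mQ_h^{\perp}$ orthogonally and write each $v\in\mQ$ as $v=P_kv+(v-P_kv)$. Since $P_k$ vanishes on $\mQ_h^{\perp}$ and is the identity on $\mQ_h$, and since $\bT_h$ maps $\mQ_h$ into itself, $B_h$ leaves $\mQ_h$ invariant, acts there exactly as $\bT_h$, and annihilates $\mQ_h^{\perp}$. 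Thus, with respect to this splitting, $B_h$ is block diagonal, with block $\bT_h$ on $\mQ_h$ and the zero operator on $\mQ_h^{\perp}$.

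Next, for $z\in\gamma$ I would invert $z\boldsymbol{I}-B_h$ block by block. On $\mQ_h$ the relevant block is $z\boldsymbol{I}-\bT_h$, while on $\mQ_h^{\perp}$ it is simply $z\boldsymbol{I}$. Because $\gamma$ is a circle centered at the isolated eigenvalue $\mu\in(0,1)$ with $\gamma\cap\sp(\bT)=\emptyset$, the set $\gamma$ is a compact subset of $\rho(\bT)$; applying Lemma~\ref{lmm:resolvent1} with $F=\gamma$ yields $h_0>0$ and a constant $C$, independent of $h$, with $\|R_z(\bT_h)v_h\|_{\mQ}\le C\|v_h\|_{\mQ}$ for all $v_h\in\mQ_h$, all $z\in\gamma$ and $h<h_0$. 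In particular $z\boldsymbol{I}-\bT_h$ is invertible on $\mQ_h$ along $\gamma$ with uniformly bounded inverse. The other block is invertible with inverse $z^{-1}\boldsymbol{I}$, and since $\mu>0$ the radius of $\mathcal{C}$ can be fixed so that $\gamma$ stays bounded away from the origin, giving $\sup_{z\in\gamma}|z|^{-1}<\infty$.

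Finally, because the splitting is orthogonal and $B_h$ is block diagonal, $R_z(B_h)$ is itself block diagonal with blocks $R_z(\bT_h)$ and $z^{-1}\boldsymbol{I}$, so its operator norm is the maximum of the two block norms:
\begin{equation*}
\|R_z(B_h)\|\le\max\Big\{\sup_{z\in\gamma}\|R_z(\bT_h)\|_{\mQ_h},\ \sup_{z\in\gamma}|z|^{-1}\Big\}\le C,
\end{equation*}
uniformly for $h<h_0$ and $z\in\gamma$, which is the claim. I expect the only genuinely delicate point to be the bookkeeping that certifies $z\in\rho(B_h)$ for $z\in\gamma$ and small $h$: this is precisely where property P1 enters, through Lemma~\ref{lmm:resolvent1}, since that lemma guarantees invertibility (not merely a bound) of $z\boldsymbol{I}-\bT_h$ on $\mQ_h$ along $\gamma$. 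Everything else—the block decomposition and the harmless $z^{-1}$ factor on $\mQ_h^{\perp}$—is routine once the structure of $B_h$ is made explicit.
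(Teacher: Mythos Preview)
Your argument is correct. The paper does not give a proof of this lemma at all; it simply records it as a consequence of \cite[Lemma~1]{DNR1}. Your block-diagonal reduction of $B_h=\bT_hP_k$ with respect to $\mQ=\mQ_h\oplus\mQ_h^{\perp}$, together with the appeal to Lemma~\ref{lmm:resolvent1} on the $\mQ_h$ block and the trivial bound $|z|^{-1}$ on $\mQ_h^{\perp}$, is precisely the explicit mechanism behind that citation. In other words, you have unpacked what the reference to \cite{DNR1} leaves implicit: Lemma~\ref{lmm:resolvent1} already \emph{is} \cite[Lemma~1]{DNR1} applied to $\bT_h$ on $\mQ_h$, and the passage from $\bT_h$ to $B_h$ is exactly your orthogonal-complement argument. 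The only point worth recording for completeness is the one you flag yourself, namely that $0\in\sp(\bT)$ and $\mu$ is the unique spectral point of $\bT$ inside $\mathcal{C}$, so the circle $\gamma$ necessarily excludes the origin and $\sup_{z\in\gamma}|z|^{-1}$ is finite; this is implicit in the paper's setup of $\mathcal{C}$.
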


The following result will be used to prove the convergence between the continuous and discrete eigenspaces.
\begin{lemma}
\label{lemma:F_T}
There exist positive constants $C$ and $h_0$ such that, for all $h<h_0$, the following estimates hold
\begin{equation*}
\|(E-F_h)|_{E(\mQ)}\| \leq C\|(\bT-B_h)|_{E(\mQ)}\| \leq Ch^{\min\{\widetilde{r},k\}},
\end{equation*}
where $\widetilde{r}>1/2$ is such that $\CE\subset [H^{\widetilde{r}}(\O)]$ (cf. Lemma \ref{lmm:charact}).
\end{lemma}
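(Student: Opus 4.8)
The plan is to prove the two inequalities separately. The first, $\|(E-F_h)|_{E(\mQ)}\|\le C\|(\bT-B_h)|_{E(\mQ)}\|$, is a purely abstract consequence of the spectral (Dunford) calculus of \cite{DNR1} together with the uniform resolvent bound already available in Lemma~\ref{lmm:Fbound}. The second, $\|(\bT-B_h)|_{E(\mQ)}\|\le Ch^{\min\{\widetilde{r},k\}}$, is the genuine approximation estimate and is obtained by specializing the argument of Lemma~\ref{lemcotste} to eigenfunctions, so that the eigenfunction regularity $\widetilde{r}$ of Lemma~\ref{lmm:charact} enters in place of the generic source regularity $r$.

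For the first inequality I would start from the integral representations $E=\frac{1}{2\pi i}\int_{\gamma}R_z(\bT)\,dz$ and $F_h=\frac{1}{2\pi i}\int_{\gamma}R_z(B_h)\,dz$, so that $E-F_h=\frac{1}{2\pi i}\int_{\gamma}\bigl(R_z(\bT)-R_z(B_h)\bigr)\,dz$. Using the resolvent identity $R_z(\bT)-R_z(B_h)=R_z(B_h)(\bT-B_h)R_z(\bT)$ and restricting to $E(\mQ)$, I would exploit that $E(\mQ)=\CE$ is invariant under $\bT$ and under $R_z(\bT)$ (indeed $\bT|_{\CE}=\mu\,\mathrm{Id}$), so that $R_z(\bT)x\in\CE$ for $x\in\CE$ and the middle factor acts exactly as $(\bT-B_h)|_{E(\mQ)}$. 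Bounding $\sup_{z\in\gamma}\|R_z(B_h)\|$ by Lemma~\ref{lmm:Fbound} and $\sup_{z\in\gamma}\|R_z(\bT)|_{E(\mQ)}\|$ by a constant (since $\gamma$ is a fixed compact curve in $\rho(\bT)$), and multiplying by the length of $\gamma$, gives the claimed bound with $C$ independent of $h$.

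For the second inequality I would take $x\in E(\mQ)=\CE$ and use $\bT x=\mu x$, so that, writing $\widetilde{u}:=\bT x$ and $\widetilde{\bsig}:=\nabla\widetilde{u}=\mu\nabla x$, Lemma~\ref{lmm:charact} yields $\widetilde{u}\in\H^{1+\widetilde{r}}(\O)$, $\widetilde{\bsig}\in[\H^{\widetilde{r}}(\O)]^2$ and $\|\widetilde{u}\|_{1+\widetilde{r},\O}\le C\|x\|_{\mQ}$. Since $(\widetilde{\bsig},\widetilde{u})$ solves the continuous source problem \eqref{eq:fuerte} with datum $x$, while $(\widetilde{\bsig}_h,\bT_hP_kx)$ solves the discrete problem \eqref{eq:source_disc} with datum $P_kx$, and since $P_k(\div\widetilde{\bsig})=P_k(-x)=-P_kx=\div\widetilde{\bsig}_h$, the commuting relation $\div\widetilde{\bsig}_I=\div\widetilde{\bsig}_h$ of Lemma~\ref{lemmainter} still holds and forces $\widetilde{\bsig}_I-\widetilde{\bsig}_h\in\mathcal{K}_h$. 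Hence the entire chain of estimates in the proof of Lemma~\ref{lemcotste} can be repeated verbatim for $\|(\bT-B_h)x\|_{\mQ}=\|\widetilde{u}-\bT_hP_kx\|_{\mQ}$, with $r$ replaced by $\widetilde{r}$ throughout the three terms \eqref{eq:firstT}--\eqref{eq:thirdT}. The dominant contribution is the gradient-projection term $\|\widetilde{\bsig}-\bpi_h^{\E}\widetilde{\bsig}\|_{0,\E}=\|\nabla\widetilde{u}-\bpi_h^{\E}\nabla\widetilde{u}\|_{0,\E}$, controlled by Lemma~\ref{Acustic}; because $\bpi_h^{\E}$ projects onto $\nabla(\bbP_{k+1}(\E))$, this error is of order $h^{\min\{\widetilde{r},k\}}$, whereas the interpolation term (Lemma~\ref{lemmainterV_I}) and the $L^2$-projection term $\|\widetilde{u}-P_k\widetilde{u}\|$ (estimate \eqref{eq:salim}) are of equal or higher order. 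Summing over $\E\in\CT_h$ and using $\|\widetilde{u}\|_{1+\widetilde{r},\O}\le C\|x\|_{\mQ}$ gives $\|(\bT-B_h)x\|_{\mQ}\le Ch^{\min\{\widetilde{r},k\}}\|x\|_{\mQ}$.

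The main obstacle is producing the exponent $\min\{\widetilde{r},k\}$ rather than $\min\{\widetilde{r},k+1\}$: the decisive point is that the leading term is the $\L^2$-error of $\widetilde{\bsig}=\nabla\widetilde{u}$ under $\bpi_h^{\E}$, which, since $\bpi_h^{\E}$ best-approximates within \emph{gradients} of degree-$(k+1)$ polynomials, coincides with an $\H^1$-type error of the $\H^{1+\widetilde{r}}(\O)$ function $\widetilde{u}$ against $\bbP_{k+1}(\E)$ and therefore loses exactly one order. A secondary but essential point is that restricting to $E(\mQ)$ in the first inequality is precisely what licenses the use of the eigenfunction regularity $\widetilde{r}$ (and the a priori bound $\|\widetilde{u}\|_{1+\widetilde{r},\O}\le C\|x\|_{\mQ}$) in the second inequality, instead of the generically worse source regularity $r$; and that the uniform-in-$h$ resolvent bound of Lemma~\ref{lmm:Fbound}, on which the whole argument rests, is itself a consequence of property P1 proved in Lemma~\ref{lemcotste}.
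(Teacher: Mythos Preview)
Your argument for the first inequality is exactly the content of \cite[Lemma~3]{DNR2}, which the paper simply cites together with Lemma~\ref{lmm:Fbound}; so there the two approaches coincide.

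For the second inequality your route differs from the paper's. The paper does not re-run the machinery of Lemma~\ref{lemcotste}; instead it writes, for $u\in E(\mQ)$,
\[
(\bT-B_h)u=\bT(I-P_k)u+(\bT-\bT_h)P_ku,
\]
bounds the first piece by $\|\bT\|\,\|(I-P_k)u\|_{\mQ}$ using the eigenfunction regularity and \eqref{eq:salim}, and bounds the second piece by invoking Lemma~\ref{lemcotste} directly with $f_h=P_ku\in\mQ_h$. This is shorter and completely modular. Your approach---repeating the whole chain \eqref{eq:sigma}--\eqref{eq:thirdT} with the continuous datum $x$ and the discrete datum $P_kx$---is also correct (your observation that $\div\widetilde{\bsig}_I=P_k(-x)=-P_kx=\div\widetilde{\bsig}_h$ is the key point that keeps $\widetilde{\bsig}_I-\widetilde{\bsig}_h\in\mathcal{K}_h$), and it has the advantage that the eigenfunction regularity $\widetilde{r}$ is used throughout, whereas in the paper's splitting the term $\|(\bT-\bT_h)P_ku\|_{\mQ}$ inherits only the generic source rate $h^{r}$ from Lemma~\ref{lemcotste}. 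In fact, what you do here is essentially what the paper itself does later in Theorem~\ref{order} for $(\bT-\bT_h)|_{\CE}$. So both arguments are valid; the paper's is more economical, yours is slightly more self-contained and sharper with respect to the exponent.
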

\begin{proof}
The first estimate is straightforward from \cite[Lemma 3]{DNR2} and Lemma \ref{lmm:resolvent1}. For the second estimate we procede as follows: Let $u\in E(\mQ)$. Then
\begin{align*}
\|(T-B_h)u\|_{\mQ}&\leq \|(T-TP_k)u\|_{\mQ}+\|(TP_k-B_h)u\|_{\mQ}\\
                            &=\|T(I-P_k)u\|_{\mQ}+\|(T-T_h)P_ku\|_{\mQ}\\
                            &\leq \|T\|\|(I-P_k)u\|_{\mQ}+\|T-T_h\|\|P_ku\|_\mQ\leq Ch^{\min\{\widetilde{r},k\}}\|u\|_{\mQ},
\end{align*}
where we have used triangular inequality, Lemma \ref{lemcotste}, the fact that $T$ is bounded and that $P_k$ is the $\L^2$-projection. This concludes the proof.
\end{proof}

The following error estimates for the approximation of eigenvalues and
eigenfunctions hold true.

\begin{theorem}
\label{gap}
There exists a strictly positive constant $C$ such that
\begin{align*}
\hdel(F_h(\mQ),E(\mQ)) 
& \le C\xi_h,
\\
\left|\mu-\mu_h^{(i)}\right|
& \le C\xi_h,\qquad i=1,\dots,m,
\end{align*}
where
$$
\xi_h
:=\sup_{f\in E(\mathcal{Q}):\ \left\|f\right\|_{\mQ}=1}
\left\|(T-T_h)f\right\|_{\mQ}.
$$
\end{theorem}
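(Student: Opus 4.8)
The plan is to prove the two estimates separately, using throughout that both $T$ and $B_h=\bT_hP_k$ are self-adjoint with respect to $(\cdot,\cdot)_{\mQ}$ (for $B_h$ this holds because $P_k$ is the $\mQ$-orthogonal projection onto $\mQ_h$ and $\bT_h$ is self-adjoint on $\mQ_h$), together with the identification $\xi_h=\|(T-B_h)|_{E(\mQ)}\|$, which is precisely the quantity already controlled in Lemma~\ref{lemma:F_T}.

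\emph{Gap estimate.} First I would record that, by the spectral convergence following Theorem~\ref{thm:spurious_free}, for $h$ small enough $E(\mQ)$ and $F_h(\mQ)$ are both $m$-dimensional. I would then bound one directional gap directly: for $f\in E(\mQ)$ with $\|f\|_{\mQ}=1$ one has $Ef=f$, hence
$$
\inf_{y\in F_h(\mQ)}\|f-y\|_{\mQ}\le\|f-F_hf\|_{\mQ}=\|(E-F_h)f\|_{\mQ},
$$
so that $\delta(E(\mQ),F_h(\mQ))\le\|(E-F_h)|_{E(\mQ)}\|$. Since the two subspaces have the same finite dimension and this gap is strictly smaller than $1$ for $h$ small, the gap is symmetric and $\hdel(F_h(\mQ),E(\mQ))=\delta(E(\mQ),F_h(\mQ))$. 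Lemma~\ref{lemma:F_T} then gives $\hdel(F_h(\mQ),E(\mQ))\le C\|(T-B_h)|_{E(\mQ)}\|=C\xi_h$.

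\emph{Eigenvalue estimate.} I would fix a discrete eigenpair $B_hu_h=\mu_h^{(i)}u_h$ with $\|u_h\|_{\mQ}=1$ and split $u_h=Eu_h+(I-E)u_h$. Using that $E$ is the orthogonal spectral projector and that $T|_{E(\mQ)}=\mu\,I$, a direct computation yields
$$
\mu-\mu_h^{(i)}=\mu\|(I-E)u_h\|_{\mQ}^2-(T(I-E)u_h,(I-E)u_h)_{\mQ}+((T-B_h)u_h,u_h)_{\mQ}.
$$
By the gap estimate just proved, $\|(I-E)u_h\|_{\mQ}=\mathrm{dist}(u_h,E(\mQ))\le\hdel(F_h(\mQ),E(\mQ))\le C\xi_h$, so the first two terms are of order $\xi_h^2$. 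For the last term I would decompose once more $u_h=Eu_h+(I-E)u_h$: the contribution $((T-B_h)Eu_h,u_h)_{\mQ}$ is bounded by $\|(T-B_h)|_{E(\mQ)}\|=\xi_h$ because $Eu_h\in E(\mQ)$, while the remaining contribution is bounded by $\|T-B_h\|\,\|(I-E)u_h\|_{\mQ}\le C\xi_h$, using that $\bT_h$ (hence $B_h$) is uniformly bounded by the well-posedness of \eqref{eq:source_disc}. Collecting these bounds gives $|\mu-\mu_h^{(i)}|\le C\xi_h^2+C\xi_h\le C\xi_h$.

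The triangle inequalities and the self-adjoint bookkeeping are routine; I expect the delicate point to be the gap argument, specifically the justification that the two directional gaps coincide. This rests on $\dim E(\mQ)=\dim F_h(\mQ)=m$ for $h$ small, which is exactly the non-pollution content of Theorem~\ref{thm:spurious_free} together with the spectral convergence from \cite{DNR1}, combined with the smallness of a single directional gap. A secondary but necessary point is to make the identification $\xi_h=\|(T-B_h)|_{E(\mQ)}\|$ explicit, so that Lemma~\ref{lemma:F_T} can be quoted verbatim.
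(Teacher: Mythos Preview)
Your argument is correct and is essentially the standard Descloux--Nassif--Rappaz argument that the paper invokes by reference: the paper's own proof is the single line ``The proof runs identically as in \cite[Theorem~1]{DNR2}'', and what you have written is precisely an unpacking of that theorem in the present setting (gap via Lemma~\ref{lemma:F_T} plus equal finite dimensions, eigenvalue error via the decomposition $u_h=Eu_h+(I-E)u_h$). The only point worth tightening is the identification $\xi_h=\|(T-B_h)|_{E(\mQ)}\|$: since $T_h$ is defined on $\mQ_h$ only, $(T-T_h)f$ for $f\in E(\mQ)$ must be read as $(T-T_hP_k)f=(T-B_h)f$, which is consistent with the paper's implicit convention (note $(f,v_h)_{\mQ}=(P_kf,v_h)_{\mQ}$ for $v_h\in\mQ_h$); you already flag this, so just state it once at the outset.
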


\begin{proof}
The proof runs identically as in \cite[Theorem 1]{DNR2}.
\end{proof}

The next
step is to show an optimal order estimate for this term.

\begin{theorem}
\label{order}
For all $r\in\GR{(}\frac12,r_{\O})$, there exists a positive constant $C$ such
that
\begin{equation*}
\label{cota}
\left\|(T-T_h)f\right\|_{\mQ}
\leq Ch^{\min\left\{\widetilde{r},k\right\}}
\left\|f\right\|_{\mQ}
\qquad\forall f\in\CE
\end{equation*}
and, consequently,
\begin{equation*}
\label{cotgap}
\xi_h\le Ch^{\min\left\{\widetilde{r},k\right\}}.
\end{equation*}
\end{theorem}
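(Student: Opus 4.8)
The plan is to read the first estimate off directly from the operator-norm bound already established in Lemma~\ref{lemma:F_T}, and then to recover the bound on $\xi_h$ by passing to the supremum. The essential observation is that, for $f\in\CE=E(\mQ)$, the discrete operator enters through $B_h=\bT_hP_k$, so the quantity $(T-T_h)f$ appearing in the definition of $\xi_h$ is to be read as $(T-B_h)f$. With this identification, the second chain of inequalities in Lemma~\ref{lemma:F_T} yields at once
\[
\|(T-T_h)f\|_{\mQ}=\|(T-B_h)f\|_{\mQ}
\le\big\|(\bT-B_h)|_{E(\mQ)}\big\|\,\|f\|_{\mQ}
\le Ch^{\min\{\widetilde{r},k\}}\|f\|_{\mQ},
\]
which is exactly the claimed estimate.

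If a self-contained argument is preferred, I would instead reproduce the splitting $(T-B_h)f=T(I-P_k)f+(T-T_h)P_kf$ and bound the two pieces separately. For the first piece I would invoke that $f=u$ is an eigenfunction, so $u\in\H^{1+\widetilde{r}}(\O)$ by Lemma~\ref{lmm:charact}, together with the projection estimate \eqref{eq:salim}; since then $\|(I-P_k)f\|_{\mQ}\le Ch^{\min\{1+\widetilde{r},k+1\}}\|f\|_{\mQ}$ and $T$ is bounded, this piece is of strictly higher order and is harmless. For the second piece I would use that $P_kf\in\mQ_h$, so Lemma~\ref{lemcotste} applies directly and gives $\|(T-T_h)P_kf\|_{\mQ}\le Ch^{r}\|f\|_{\mQ}$; since the source regularity $r$ of Lemma~\ref{additional_source} and the eigenfunction regularity $\widetilde{r}$ of Lemma~\ref{lmm:charact} are both governed by the largest reentrant angle of $\O$, one has $r\ge\min\{\widetilde{r},k\}$, whence $h^{r}\le h^{\min\{\widetilde{r},k\}}$ and this piece carries the stated rate.

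To conclude I would take the supremum over $f\in E(\mQ)$ with $\|f\|_{\mQ}=1$, which promotes the pointwise bound to $\xi_h\le Ch^{\min\{\widetilde{r},k\}}$. I anticipate that the only real delicacy lies in the bookkeeping of exponents rather than in any individual estimate: one must confirm that passing the discrete datum $P_kf$ through $T-T_h$ still delivers the eigenfunction rate, and that the first, higher-order piece cannot degrade the bound. Concretely this amounts to tracking the regularity $\widetilde{\bsig}=\nabla\widetilde{u}\in[\H^{\widetilde{r}}(\O)]^2$ through the interpolation and projection lemmas, which is precisely where the exponent $\min\{\widetilde{r},k\}$—as opposed to the sharper $\min\{\widetilde{r},k+1\}$—gets pinned down.
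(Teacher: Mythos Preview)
Your argument is correct, but it is not the route the paper takes. The paper's one-line proof says to rerun the computation of Lemma~\ref{lemcotste} verbatim, replacing the generic source regularity of $(\widetilde{\bsig},\widetilde{u})$ by the eigenfunction regularity: since $f\in\CE$ one has $\widetilde{u}=Tf=\mu f\in\H^{1+\widetilde{r}}(\O)$ and $\widetilde{\bsig}=\nabla\widetilde{u}\in[\H^{\widetilde{r}}(\O)]^2$, and these exponents are then fed directly into the three approximation terms $\|\widetilde{u}-P_k\widetilde{u}\|_{0,\O}$, $\|\widetilde{\bsig}-\widetilde{\bsig}_I\|_{0,\E}$ and $\|\widetilde{\bsig}-\bpi_h^\E\widetilde{\bsig}\|_{0,\E}$ of estimate~\eqref{eq: errorT}. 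In other words, the paper opens up the error analysis and tracks $\widetilde{r}$ through the interpolation and projection lemmas, without ever passing through $B_h$ or the splitting $T(I-P_k)+ (T-T_h)P_k$.

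Your approach instead stays at the operator level: you either quote Lemma~\ref{lemma:F_T} outright, or reproduce its two-term decomposition, treating Lemma~\ref{lemcotste} as a black box for the piece $(T-T_h)P_kf$. This is shorter and avoids rewriting the VEM error estimates, but it forces you to reconcile the source exponent $r$ coming from Lemma~\ref{lemcotste} with the eigenfunction exponent $\widetilde{r}$, which you do by arguing that both are governed by the same reentrant-corner singularity. The paper's route sidesteps that reconciliation entirely, because the eigenfunction regularity is plugged in at the level of the individual approximation terms rather than recovered a posteriori from a generic operator bound. Either argument is acceptable; yours is more modular, the paper's makes the provenance of the exponent $\min\{\widetilde{r},k\}$ more transparent.
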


\begin{proof}
The proof is identical to that of Lemma~\ref{lemcotste}, but using now
the additional regularity from Lemma~\ref{additional_source}.
\end{proof}

The error estimate for the eigenvalue $\mu\in(0,1)$ of $T$ leads to an
analogous estimate for the approximation of the eigenvalue
$\l=\frac1{\mu}$ of Problem~\ref{P1} by means of the discrete
eigenvalues $\l_h^{(i)}:=\frac1{\mu_h^{(i)}}$, $1\le i\le m$, of
Problem~\ref{P2_disc}. However, the order of convergence in
Theorem~\ref{gap} is not optimal for $\mu$ and, hence, not optimal for
$\l$ either. Our next goal is to improve this order.

\begin{theorem}
\label{cotadoblepandeo} 
For all $r\in \GR{(}\frac12,r_\O)$, there exists a strictly positive constant
$C$ such that
$$\
\left|\l-\l_h^{(i)}\right|
\le Ch^{2\min\{\widetilde{r},k\}}.
$$
\end{theorem}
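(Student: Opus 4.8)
The plan is to work directly with $\l$ and $\l_h^{(i)}$ (abbreviate $\l_h:=\l_h^{(i)}$) through an exact error identity that is \emph{quadratic} in the eigenfunction errors, so that the first-order rates already available get squared into the desired double order. For the given discrete eigenpair $(\l_h,\bsig_h,u_h)$ I would first use the spectral correspondence behind Theorem~\ref{gap} to select the continuous eigenfunction $u\in\CE$ nearest to $u_h$, set $\bsig:=\nabla u$, and normalize both so that $\|u\|_{\mQ}=\|u_h\|_{\mQ}=1$ and $(u,u_h)_{\mQ}>0$. Testing Problem~\ref{P2} with $(\btau,v)=(\bsig,u)$ and Problem~\ref{P2_disc} with $(\btau_h,v_h)=(\bsig_h,u_h)$ then gives $\l=a(\bsig,\bsig)$ and $\l_h=a_h(\bsig_h,\bsig_h)$.

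The central step is the identity
\[
\l_h-\l
=a_h(\bsig_I-\bsig_h,\bsig_I-\bsig_h)
-\l\,\|u-u_h\|_{\mQ}^{2}
+\bigl[a(\bsig,\bsig)-a_h(\bsig_I,\bsig_I)\bigr],
\]
where $\bsig_I\in\Vh$ is the interpolant of $\bsig$ defined by \eqref{freedom}--\eqref{freedom2}. I would derive it by testing Problem~\ref{P2_disc} with $\btau_h=\bsig_I-\bsig_h$ and invoking the structural relations $\div\bsig=-\l u$, $\div\bsig_h=-\l_h u_h$, and the commuting property $\div\bsig_I=P_k(\div\bsig)=-\l P_k u$ of Lemma~\ref{lemmainter}. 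These give $a_h(\bsig_h,\bsig_I-\bsig_h)=\l(u,u_h)_{\mQ}-\l_h$; expanding $a_h(\bsig_I,\bsig_I)-a_h(\bsig_h,\bsig_h)$ and substituting $(u,u_h)_{\mQ}=1-\tfrac12\|u-u_h\|_{\mQ}^{2}$ produces the displayed formula.

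The first two terms are immediately of double order. By the stability bound \eqref{consistencia2} and the triangle inequality, $a_h(\bsig_I-\bsig_h,\bsig_I-\bsig_h)\le\alpha^{*}\bigl(\|\bsig-\bsig_I\|_{0,\O}+\|\bsig-\bsig_h\|_{0,\O}\bigr)^{2}$, and both the interpolation error (Lemmas~\ref{lemmainter}--\ref{lemmainterV_I}) and the eigenfunction flux error (the analogue of \eqref{eq:sigma}, obtained exactly as in Lemma~\ref{lemcotste} but with the eigenfunction regularity of Lemma~\ref{lmm:charact}) are $O(h^{\min\{\widetilde{r},k\}})$; similarly $\l\|u-u_h\|_{\mQ}^{2}\le Ch^{2\min\{\widetilde{r},k\}}$ by Theorem~\ref{order}. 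Squaring each first-order factor delivers $h^{2\min\{\widetilde{r},k\}}$.

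The \emph{main obstacle} is the consistency (variational-crime) term $a(\bsig,\bsig)-a_h(\bsig_I,\bsig_I)$, which a crude bound controls only to first order. I would treat it elementwise, using $a_h^\E(\bsig_I,\bsig_I)=\|\bpi_h^\E\bsig_I\|_{0,\E}^{2}+S^\E(\bsig_I-\bpi_h^\E\bsig_I,\bsig_I-\bpi_h^\E\bsig_I)$; the stabilization part is $\le c_1\|\bsig_I-\bpi_h^\E\bsig_I\|_{0,\E}^{2}=O(h^{2\min\{\widetilde{r},k\}})$. For $\|\bsig\|_{0,\E}^{2}-\|\bpi_h^\E\bsig_I\|_{0,\E}^{2}$ the $[\L^2(\E)]^2$-orthogonality of $\bpi_h^\E$ leaves $\|\bsig-\bpi_h^\E\bsig\|_{0,\E}^{2}$ (double order, by Lemma~\ref{Acustic} with $\bsig=\nabla u$) together with the genuinely delicate cross term $(\bpi_h^\E\bsig,\bsig-\bsig_I)_{0,\E}$. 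Writing $\bpi_h^\E\bsig=\nabla p$ with $p\in\bbP_{k+1}(\E)$ and integrating by parts, this becomes a volume contribution governed by $\div(\bsig-\bsig_I)=(I-P_k)\div\bsig$ and a boundary contribution that annihilates against $\bbP_k(e)$ by the edge degrees of freedom \eqref{freedom}; the residual orthogonalities then gain the extra power of $h$. This is where the Raviart--Thomas-type space, the commuting diagram, and the unisolvence of the degrees of freedom must be combined carefully to recover the second power of $h$; once it is in hand, collecting the three bounds yields $|\l-\l_h^{(i)}|\le Ch^{2\min\{\widetilde{r},k\}}$.
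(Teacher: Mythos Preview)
Your error identity is correct and the bounds on $a_h(\bsig_I-\bsig_h,\bsig_I-\bsig_h)$ and $\l\,\|u-u_h\|_{\mQ}^{2}$ go through as you say. The route, however, differs from the paper's and creates extra work that you do not fully carry out. The paper never inserts $\bsig_I$ into the identity; it writes both problems through the combined forms $A$ and $A_h$ and obtains
\[
(\l-\l_h)\,\|u_h\|_{\mQ}^{2}
= A\bigl((\bsig-\bsig_h,u-u_h);(\bsig-\bsig_h,u-u_h)\bigr)
+\l\,\|u-u_h\|_{\mQ}^{2}
+\bigl[a_h(\bsig_h,\bsig_h)-a(\bsig_h,\bsig_h)\bigr].
\]
The consistency term now involves only the \emph{discrete} eigenflux $\bsig_h$, and this is decisive: the VEM consistency property (namely $a_h^{\E}=a^{\E}$ whenever one argument lies in $\widehat{\bV}_h^\E$) allows one to subtract $\bpi_h^\E\bsig$ in \emph{both} slots, giving
\[
a_h^\E(\bsig_h,\bsig_h)-a^\E(\bsig_h,\bsig_h)
=a_h^\E(\bsig_h-\bpi_h^\E\bsig,\bsig_h-\bpi_h^\E\bsig)
-a^\E(\bsig_h-\bpi_h^\E\bsig,\bsig_h-\bpi_h^\E\bsig),
\]
which is immediately $\le C\|\bsig_h-\bpi_h^\E\bsig\|_{0,\E}^{2}\le C\bigl(\|\bsig_h-\bsig\|_{0,\E}^{2}+\|\bsig-\bpi_h^\E\bsig\|_{0,\E}^{2}\bigr)$. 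No cross term, no integration by parts, no edge traces.

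Your consistency term $a(\bsig,\bsig)-a_h(\bsig_I,\bsig_I)$ mixes the continuous $\bsig$ with the interpolant $\bsig_I$, so the consistency trick does not dispose of it in one stroke, and you are forced into the cross term $(\bpi_h^\E\bsig,\bsig-\bsig_I)_{0,\E}$. Your treatment of it has a gap: writing $\bpi_h^\E\bsig=\nabla p$ with $p\in\bbP_{k+1}(\E)$ and integrating by parts, the boundary integrand carries $p|_e\in\bbP_{k+1}(e)$, \emph{not} $\bbP_k(e)$, so the edge degrees of freedom~\eqref{freedom} do not annihilate it; the degree-$(k{+}1)$ residual of $p$ on each edge still has to be paired with $(\bsig-\bsig_I)\cdot\bn$ and estimated via trace and inverse inequalities you do not supply. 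If you wish to keep your identity, a cleaner fix is to skip the integration by parts and use the volume degrees of freedom~\eqref{freedom2} directly: they give $(\bsig-\bsig_I)\perp\nabla\bbP_k(\E)$ in $[\L^2(\E)]^2$, so for any $q\in\bbP_k(\E)$ one has $(\bpi_h^\E\bsig,\bsig-\bsig_I)_{0,\E}=(\bpi_h^\E\bsig-\nabla q,\bsig-\bsig_I)_{0,\E}$, and choosing $\nabla q$ to approximate $\bsig=\nabla u$ yields two first-order factors. Either way, the paper's argument is shorter and sidesteps the obstacle entirely.
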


\begin{proof}

Let $(\bsig_h,u_h)$ be such that $(\l_h,(\bsig_h,u_h))$ is a solution of Problem \ref{P2_disc} with $\|u_h\|_{\mQ}=1$. Also, according to Theorems \ref{gap} and \ref{order}, there exists a solution of Problem \ref{P1} that satisifies
\begin{equation}
\label{eq:error_sigma_u}
\|\bsig-\bsig_h\|_{\bV}+\|u-u_h\|_{\mQ}\leq Ch^{\min\{r,k\}}.
\end{equation}

Let us rewrite Problems \ref{P2}  and \ref{P2_disc} as follows:
\begin{align*}
A\big( (\bsig,u); (\btau,v)\big)&=-\lambda (u,v)_{\mQ}\qquad\,\,\,\,\,\,\forall (\btau,v)\in\bV\times\mQ,\\
A_h\big( (\bsig_h,u_h); (\btau_h,v_h)\big)&=-\lambda (u_h,v_h)_{\mQ}\qquad\forall (\btau_h,v_h)\in\bV_h\times\mQ_h,
\end{align*}
where the bilinear forms $A:\mathcal{V}\times\mQ\rightarrow\mathbb{R}$ and $A:\mathcal{V}_h\times\mQ_h\rightarrow\mathbb{R}$ are defined by
\begin{equation*}
\label{eq:formaA}
A\big( (\bsig,u); (\btau,v)\big):=a(\bsig,\btau)+b(\btau,u)+b(\bsig,v),
\end{equation*}
and 
\begin{equation*}
\label{eq:formaAh}
A_h\big( (\bsig_h,u_h); (\btau_h,v_h)\big):=a_h(\bsig_{h},\btau_h)+b(\btau_h,u_h)+b(\bsig_h,v_h).
\end{equation*}
With these definitions at hand, we have
\begin{align*}
A\big((\bsig-\bsig_h, u-u_h);&(\bsig-\bsig_h,u-u_h)\big)+\lambda(u-u_h,u-u_h)_{\mQ}\\
&=A\big((\bsig_h,u_h);(\bsig_h,u_h)\big)+\lambda(u_h,u_h)_{\mQ}\\
&=A\big((\bsig_h,u_h);(\bsig_h,u_h)\big)+\lambda(u_h,u_h)_{\mQ}+\l_{h}(u_h,u_h)_{\mQ}-\l_{h}(u_h,u_h)_{\mQ}\\
&=A\big((\bsig_h,u_h);(\bsig_h,u_h)\big)+\l_{h}(u_h,u_h)_{\mQ}+\lambda(u_h,u_h)_{\mQ}-\l_{h}(u_h,u_h)_{\mQ}\\
&=A\big((\bsig_h,u_h);(\bsig_h,u_h)\big)-A_{h}\big((\bsig_h,u_h);(\bsig_h,u_h)\big)+(\lambda-\l_{h})(u_h,u_h)_{\mQ}.
\end{align*}
Then, we arrive to the following  identity
\begin{align*}
(\lambda-\lambda_{h})(u_h,u_h)_{\mQ}&=\underbrace{A\big((\bsig-\bsig_h, u-u_h);(\bsig-\bsig_h,u-u_h)\big)+\lambda(u-u_h,u-u_h)_{\mQ}}_{\textbf{I}}\\
&+\underbrace{A_h\left((\bsig_h,u_h);(\bsig_h,u_h)\right)-A((\bsig_h,u_h);(\bsig_h,u_h))}_{\textbf{II}}.
\end{align*}

The aim now is to estimate terms $\textbf{I}$ and $\textbf{II}$. For $\textbf{I}$ we have
\begin{align*}
|\textbf{I}|&=\left|A\big((\bsig-\bsig_h, u-u_h);(\bsig-\bsig_h,u-u_h)\big)-\lambda(u-u_h,u-u_h)_{\mQ} \right|\\
&\leq \left|A\big((\bsig-\bsig_h, u-u_h);(\bsig-\bsig_h,u-u_h)\big)\right|+\left|\lambda(u-u_h,u-u_h)_{\mQ} \right|\\
&\leq \|\bsig-\bsig_h\|^2_{\bV}+\|u-u_h\|^2_{\mQ}\leq Ch^{2\min\{\widetilde{r},k\}}.
\end{align*}

Let $\bsig_h\in\L^2(\O)$ be such that $\bsig_\pi|_{\E}\in\mathbb{P}_k(E)$, for all $\E\in\mathcal{T}_h$. From the definition of $A(\cdot,\cdot)$, $A_h(\cdot,\cdot)$ and $a_h(\cdot,\cdot)$, triangular inequality and  \eqref{20},  the term $\textbf{II}$
is controlled as follows:
\begin{align*}
|\textbf{II}|&= |a_h(\bsig_h,\bsig_h)-a(\bsig_h,\bsig_h)|\\
              &=\left|\sum_{\E\in\mathcal{T}_h}a_h^\E(\bsig_h,\bsig_h)-a^\E(\bsig_h,\bsig_h) \right|\\
               &=\left|\sum_{\E\in\mathcal{T}_h}a_h^\E(\bsig_h-\bpi_h^\E\bsig,\bsig_h-\bpi_h^\E\bsig)-a^\E(\bsig_h-\bpi_h^\E\bsig,\bsig_h-\bpi_h^\E\bsig) \right|\\
              &\leq C\sum_{\E\in\mathcal{T}_h}\left\|\bsig_h-\bpi_h^\E\bsig\right\|_{0,\E}^2 \leq C\sum_{\E\in\mathcal{T}_h}\left(\left\|\bsig_h-\bsig\right\|_{0,\E}^2+\left\|\bsig-\bpi_h^\E\bsig\right\|_{0,\E}^2\right)\leq Ch^{2\min\{\widetilde{r},k\}},
\end{align*}
 where we have used \eqref{eq:error_sigma_u} and the properties of the projection. This concludes the proof.

\end{proof}

\setcounter{equation}{0}
\section{Numerical results}
\label{SEC:NUMER}

In this section we report some numerical tests which allows us to assess the performance of the method.
Following the ideas proposed in \cite{BBMR2014}, we have implemented in
a MATLAB code a lowest-order VEM ($k=0$) on arbitrary polygonal meshes.
A natural choice for $S^{\E}(\cdot, \cdot)$ is given by
\begin{equation}
\label{eq:SE}
S^{\E}(\bsig_{h},\btau_{h}):=w_{\E}\sum_{k=1}^{N_{\E}}\left(\int_{e_{k}}\bsig_{h}\cdot \bn\right)\left(\int_{e_{k}}\btau_{h}\cdot \bn\right)
\end{equation}
where $N_{\E}$ represents the number of edges in the polygon $\E$ and  $w_{\E}$ is the so-called stability constant which will be taken of the order of unity, see \cite[Section 5]{BMRR17} for more details.

We report in this section a couple of numerical tests which allowed us
to assess the theoretical results proved above. 

We begin with some numerical tests to asses the performance of the proposed virtual element method. More precisely, we are interested, first, in the computation of convergence orders to confirm the theoretical results of the analysis. 

With this goal in mind, we present three scenarios in which we will prove our method: the first is to compute the eigenvalues and convergence rates in the unitary square,  the second will correspond to a non-convex domain and the last one considers a square with mixed boundary conditions.
\subsubsection{Test 1:  unit square}
In this test, the domain is the unit square $\O=(0,1)^{2}$. Due the simplicity of this domain, the exact solutions
are known. Indeed, the eigenvalues for this problem are
\begin{equation}
\label{eq:exact1}
\lambda=(m^2+n^2)\pi^2, \qquad m,n\in\mathbb{N}, \,\, m,n\neq 0,
\end{equation}
with the associated eigenfunctions
\begin{equation*}
\label{eq:exact2}
u(x,y)=\sin(m\pi x)\sin(n\pi y), \qquad m,n\in\mathbb{N},\,\, m,n\neq 0.
\end{equation*}

For our numerical tests, we have used four different families of meshes which we describe in the following list:
\begin{itemize}
\item $\CT_{h}^{1}$: triangular meshes;
\item $\CT_{h}^{2}$: square meshes;
\item $\CT_{h}^{3}$:  square meshes with $N$ nodes per side
 then it perturbs all nodes but the central one and the boundary ones
\item $\CT_{h}^{4}$:  trapezoidal meshes which consist of partitions of the domain into $N\times  N$ congruent
trapezoids, all similar to the trapezoid with vertices $(0, 0)$, $(\frac{1}{2},0)$, $(\frac{1}{2},\frac{2}{3})$, $(0, \frac{2}{3})$.
\end{itemize}
The refinement parameter $N$, used to label each mesh, represents the number of elements intersecting each edge.
In the unit square, the eigenfunctions of problem \eqref{eq:laplace}
are smooth and hence, the approximation orders for our method 
are optimal. In Table \ref{tabla:1} we report the first six eigenvalues
computed with our method. In the row 'Order' we present the convergence rates
for each eigenvalue. These order have been computed respect to the exact
eiegenvalues provided by \eqref{eq:exact1}. 

\begin{table}[H]
\begin{center}
\caption{Test 1. The lowest computed eigenvalues $\l_{hi}$, $1\le i\le6$
for different $\CT_{h}$.}
\vspace{0.3cm}
\begin{tabular}{|c|c|c|c|c|c|c|c|}
\hline
$\CT_{h}$& $N$ & $\l_{h1}$ & $\l_{h2}$ & $\l_{h3}$ & 
 $\l_{h4}$ & $\l_{h5}$& $\l_{h6}$  \\
\hline
& 8 &  19.2126  & 46.0629  & 46.2998 &  71.0656  & 86.7093  & 86.9136\\
  & 16 &  19.6065 &  48.5605 &  48.5691 &  76.9474 &  95.6175 &  95.6228\\
$\CT_{h}^{1}$  & 32&  19.7047 &  49.1402 &  49.1409 &  78.4259 &  97.8534 &  97.8615\\
  & 64 &  19.7309 &  49.2955 &  49.2961 &  78.8238 &  98.4859 &  98.4886\\ \hline
  &  Order            &   1.9881  &  1.9825  &  1.9541  &  1.9591  &  1.9372  &  1.9364\\
  \hline
   &Exact        &19.7392 &  49.3480 &  49.3480  & 78.9568  & 98.6960 &  98.6960\\ 
   \hline
\hline
&8  & 18.7724  & 42.0875  & 42.0875 &  65.4027  & 69.7660  & 69.7660\\
  & 16&  19.4886 &  47.2890 &  47.2890 &  75.0894 &  89.3259 &  89.3259\\
$\CT_{h}^{2}$   &32  &  19.6760 &  48.8153 &  48.8153 &  77.9546 &  96.1656 &  96.1656\\
  & 64 &  19.7234 &  49.2137 &  49.2137 &  78.7039 &  98.0505 &  98.0505\\\hline
    &  Order                      &  1.9781 &   1.9218 &   1.9218 &   1.9180 &   1.8346 &   1.8346\\
    \hline
 &Exact    &19.7392 &  49.3480 &  49.3480  & 78.9568  & 98.6960 &  98.6960\\
    \hline
\hline
& 8  & 18.7419 &  41.9133  & 42.0552 &  65.5365 &  69.0829  & 70.2747\\
  & 16 &  19.4846 &  47.2780 &  47.2843 &  74.9461 &  89.3402 &  89.4316\\
$\CT_{h}^{3}$  & 32  &  19.6745 &  48.8119 &  48.8146 &  77.9310 &  96.1702 &  96.1818\\
  & 64 &  19.7230 &  49.2130 &  49.2131 &  78.6981 &  98.0522 &  98.0529\\\hline
  &  Order              &   1.9727 &   1.8968  &  1.8866  &  1.8548 &   1.7757  &  1.7494\\
  \hline
&Exact    &19.7392 &  49.3480 &  49.3480  & 78.9568  & 98.6960 &  98.6960\\
\hline
\hline
& 8  & 18.6654  & 41.8949  & 42.2913  & 64.0705 &  70.1566 &  71.6883\\
  & 16 &  19.4595 &  47.2477 &  47.3714 &  74.6558 &  89.6327 &  90.2307\\
 $\CT_{h}^{4}$ & 32  &  19.6685 &  48.8055 &  48.8384 &  77.8372 &  96.2649 &  96.4358\\
  & 64 &  19.7215 &  49.2112 &  49.2196 &  78.6739 &  98.0769 &  98.1212\\\hline
 &  Order            &    1.9746  &  1.9256 &   1.9295  &  1.9094 &   1.8478  &  1.8567\\
 \hline
&Exact      &19.7392 &  49.3480 &  49.3480  & 78.9568  & 98.6960 &  98.6960\\\hline
\end{tabular}
\label{tabla:1}
\end{center}
\end{table}

It is clear from Table \ref{tabla:1} the double order of convergence for the eigenvalues due the smoothness
of the eigenfunctions.  Also, no spurious eigenvalues are observed in these test, which confirms the 
accuracy and stability of the proposed mixed method. Also In Figure \ref{fig:cuad} we present plots for the first four eigenfunctions. 

\begin{figure}[H]
	\begin{center}
		\begin{minipage}{5cm}
			\centering\includegraphics[height=5cm, width=5cm]{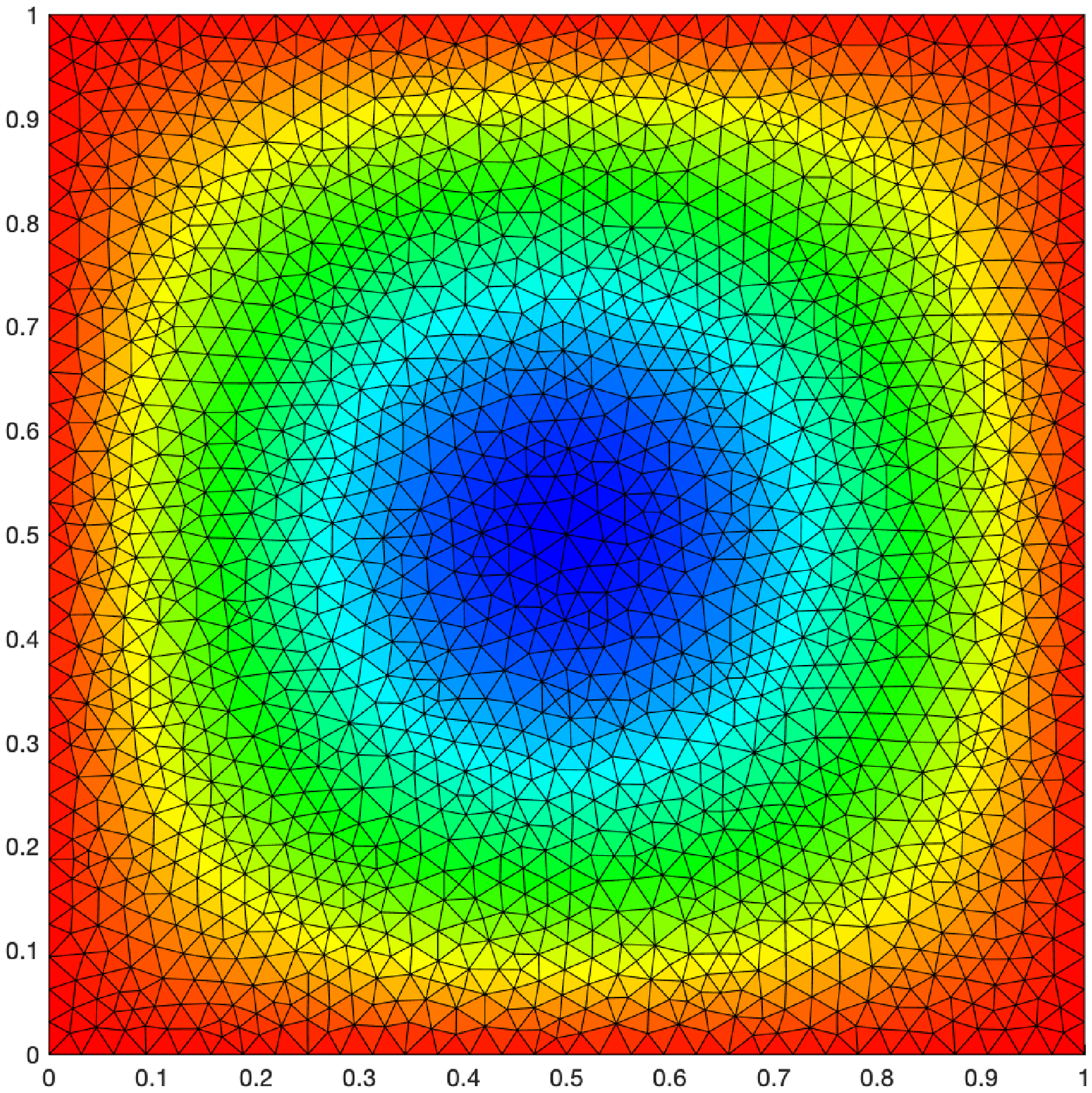}
		\end{minipage}
		\begin{minipage}{5cm}
			\centering\includegraphics[height=5cm, width=5cm]{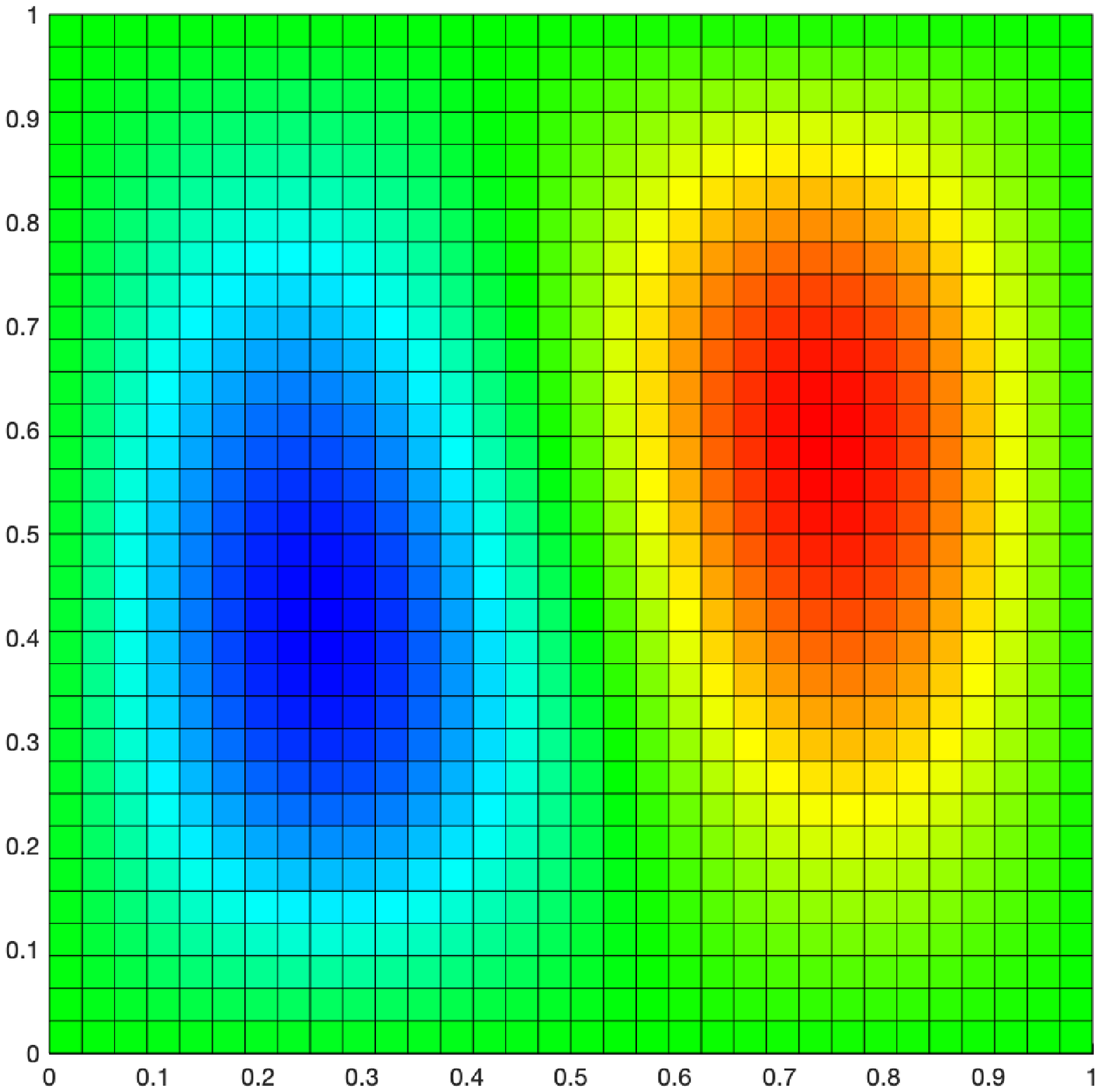}
		\end{minipage}
		\begin{minipage}{5cm}
			\centering\includegraphics[height=5cm, width=5cm]{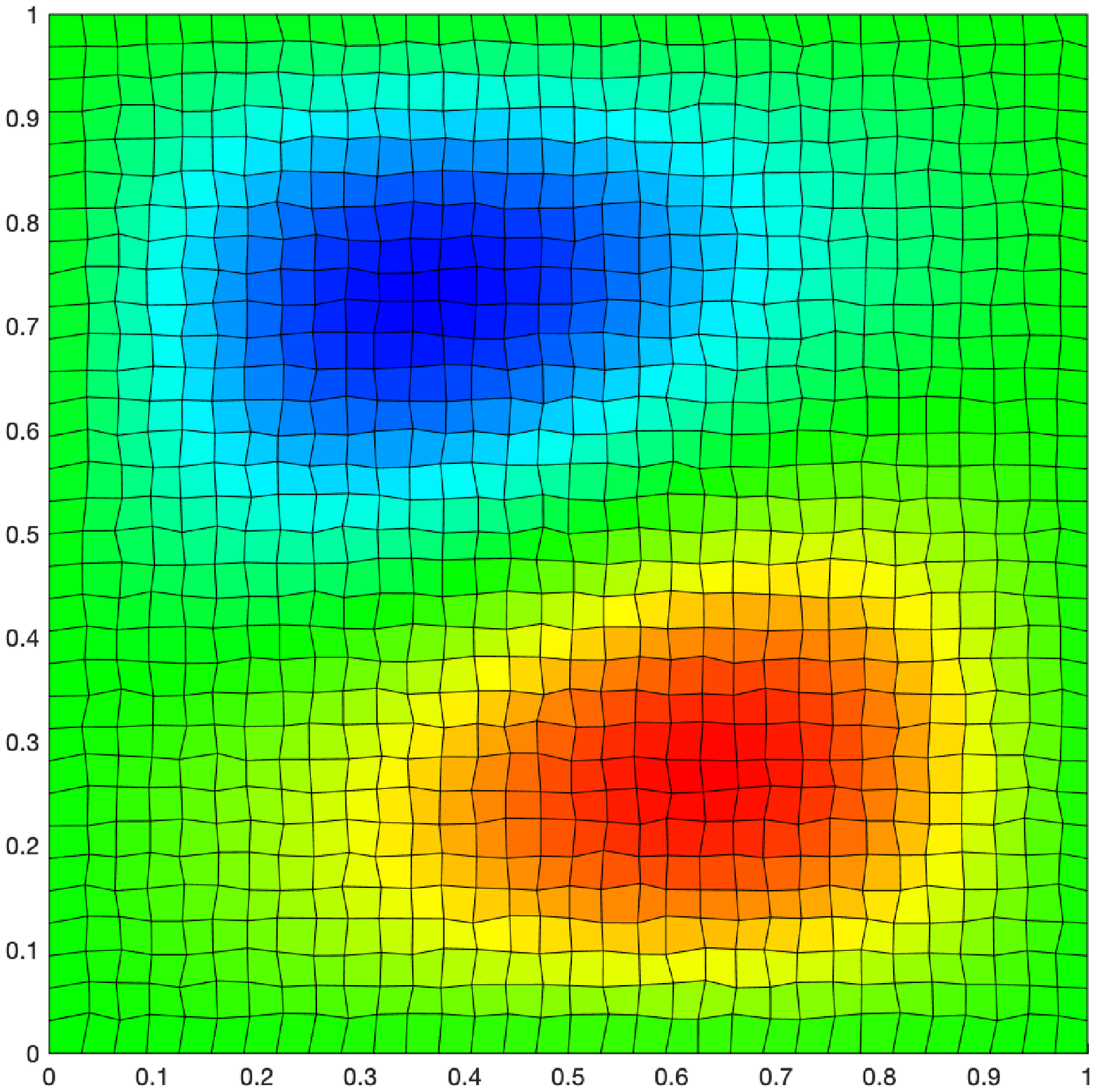}
		\end{minipage}
		\begin{minipage}{5cm}
			\centering\includegraphics[height=5cm, width=5cm]{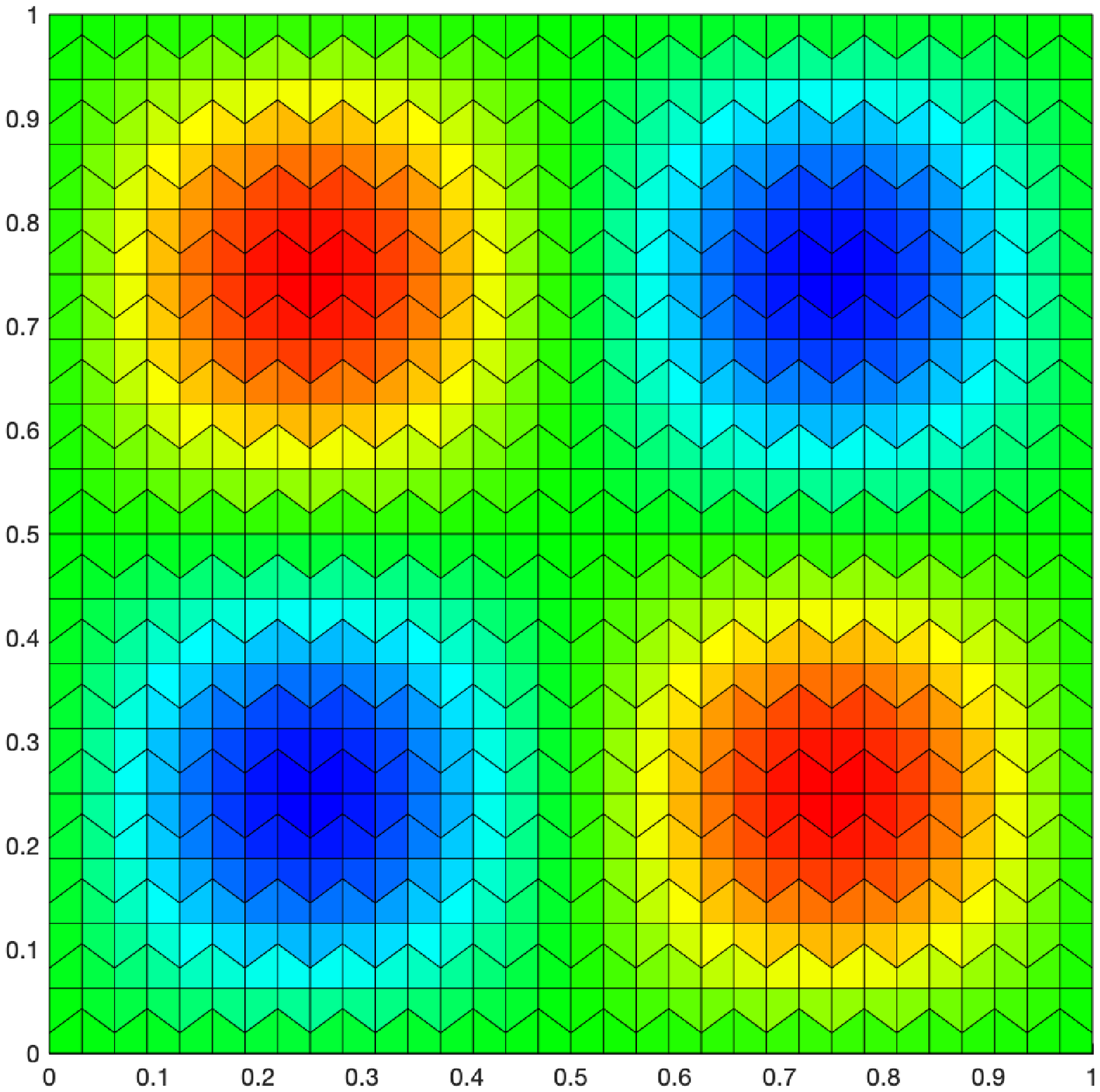}
		\end{minipage}
		\caption{Test 1. Plots of the first four eigenfunctions for$\CT_{h}^{1}$(top left) ,$\CT_{h}^{2}$(top right), $\CT_{h}^{3}$(bottom left) and $\CT_{h}^{4}$(bottom right) with $N=32$}
		\label{fig:cuad}
	\end{center}
\end{figure}
\subsection{Effect of the stability constant}
The aim of this test is to analyze the influence of the stability  (see \eqref{eq:SE}) on the computed spectrum, to know whether the quality of the computations can be affected by this constant.  We will consider the same geometrical configuration of the previous test. We will compute the lowest eigenvalue   for different values $w_{\E}$ using the family of meshes $\CT_{h}^{2}$ .

\begin{table}[H]
\begin{center}
\caption{Test 1. The lowest eigenvalue $\l_{h1}$ for
$w_{\E}=0$ and $w_{\E}=4^{-k}$ with $-6\leq k\leq 6$ and $\CT_{h}^{2}$.}
\begin{tabular}{|c|c|c|c|c|c|c|c|}
\hline
 $N$ & $w_{\E}=4^{6}$ & $w_{\E}=4^{5}$ & $w_{\E}=4^{4}$ & 
 $w_{\E}=4^{3}$ & $w_{\E}=4^{2}$ & $w_{\E}=4^{1}$ & 
 $w_{\E}=4^{0}$ \\
\hline
  8   &0.0623    &  0.2470 &   0.9530  &  3.3404  &  8.9395  & 15.3873  & 18.7724\\
  16  &  0.2469 &   0.9521&    3.3296 &   8.8625 &  15.1606 &  18.4360 &  19.4886\\
   32 &   0.9519&    3.3269 &   8.8434 &  15.1048 &  18.3535 &  19.3965 &  19.6760\\
   64 &   3.3262 &   8.8386  & 15.0909 &  18.3330 &  19.3735 &  19.6524 &  19.7234\\
  128 &  8.8374 &  15.0874  & 18.3279 &  19.3678 &  19.6465 &  19.7174 &  19.7352\\
  256 &  15.0865 &  18.3266 &  19.3664 &  19.6450 &  19.7160 &  19.7338 &  19.7382\\
 \hline
\!\!Order\!\! & 0.3746 &   0.7304 &   1.1463 &   1.5302 &   1.8027  &  1.9400 &   1.9885  \\
\hline
$\l_1$ & 19.7392 & 19.7392 & 19.7392 & 19.7392 & 19.7392 & 19.7392 & 
19.7392 \\
\hline
 $N$ & $w_{\E}=4^{-1}$ & $w_{\E}=4^{-2}$ & $w_{\E}=4^{-3}$ & 
$w_{\E}=4^{-4}$ & $w_{\E}=4^{-5}$ & $w_{\E}=4^{-6}$ & 
$w_{\E}=0$ \\
\hline
 8  &19.8649  & 20.1582  & 20.2328 &  20.2516 &  20.2563 &  20.2575&20.2579 \\
16  & 19.7708  & 19.8427  & 19.8607 &  19.8652 &  19.8664 &  19.8666 & 19.8667\\
32  & 19.7471  & 19.7650 &  19.7695  & 19.7706 &  19.7709 &  19.7709&19.7710 \\
64  & 19.7412  & 19.7457  & 19.7468  & 19.7470  & 19.7471 &  19.7471&19.7471 \\
128 &  19.7397 &  19.7408 &  19.7411  & 19.7412  & 19.7412 &  19.7412& 19.7412\\
256  &19.7393  & 19.7396  & 19.7397  & 19.7397  & 19.7397  & 19.7397&19.7397 \\
\hline
\!\!Order\!\! & 1.9978   & 2.0039 &   2.0049 &   2.0052  &  2.0052 &   2.0052 & 2.0052 \\
\hline
$\l_1$ &19.7392 & 19.7392 & 19.7392 & 19.7392 & 19.7392 & 19.7392 & 19.7392 \\
\hline
\end{tabular}
\label{TABLA:SL}
\end{center}
\end{table}

It can be seen from Table \ref{TABLA:SL}  despite to the fact that the different stabilization numbers do not introduce spurious eigenvalues in the method, the order of convergence is affected  when the stabilization constant is large.

\subsection{Test 2: L-shaped domain}
In this stage, we consider the classic non-convex domain called \emph{L-shaped domain}. 
The non-convexity fo this geometry leads to obtain non-smooth eigenfunctions
due the singularity and hence, non optimal order of convergence. Contrary to the square domain, for this test we do
not have analytical solutions. Hence, we obtain the eigenvalues for different meshes
and different refinement levels and compute the order of convergence respect to an extrapolated value, which
is reported in the row 'Extrap.' of Table \ref{tabla2}. These extrapolated values have been computed by means of a least-square fitting.

In this test we have used four different families of meshes (see Fig. \ref{figL}):
\begin{itemize}
\item $\CT_{h}^{5}$: non-structured hexagonal meshes made of convex hexagons;
\item $\CT_{h}^{6}$: triangular meshes;
\item $\CT_{h}^{7}$:  square meshes;
\end{itemize}

In Table \ref{tabla2} we report the first six eigenvalues of  \eqref{eq:laplace}
computed with our mixed method in the L-shaped domain.

\begin{table}[H]
\begin{center}
\caption{Test 2. The lowest computed eigenvalues $\l_{hi}$, $1\le i\le6$
for different $\CT_{h}$.}
\vspace{0.3cm}
\begin{tabular}{|c|c|c|c|c|c|c|c|}
\hline
$\CT_{h}$& $N$ & $\l_{h1}$ & $\l_{h2}$ & $\l_{h3}$ & 
 $\l_{h4}$ & $\l_{h5}$& $\l_{h6}$  \\
\hline
&18 &  37.7081 &  59.7051  & 76.6486   &113.201&  119.976  & 155.366\\
&38  & 38.3107 &  60.5558  & 78.4827  & 117.017  & 125.714 &  163.376\\
$\CT_{h}^{5} $ &54  & 38.4262 &  60.6812  & 78.7317  & 117.587  & 126.701 &  164.712\\
&70 &  38.4735 &  60.7270  & 78.8254  & 117.797  & 127.079 &  165.205\\
&90  & 38.4999  & 60.7511  & 78.8784  & 117.909  & 127.284  & 165.454 \\
\hline
 & Order   &1.65      &         2.04       &        2.13     &          2.02   &            1.82       &         1.89  \\ \hline
& Extrap. &38.5625  & 60.7943   &78.9530  & 118.109 & 127.721 & 166.000
 \\\hline
$\CT_{h}$& $N$ & $\l_{h1}$ & $\l_{h2}$ & $\l_{h3}$ & 
 $\l_{h4}$ & $\l_{h5}$& $\l_{h6}$  \\
\hline
& 10 &  35.6472 &  56.2124  & 71.8313 &  102.1495 &  105.329 &  133.136\\
& 20  & 37.6435  & 59.5705  & 77.0532  & 113.680 &  121.000 &  155.988\\
$\CT_{h}^{6} $& 30  & 38.0916  & 60.2398  & 78.0999  & 116.088  & 124.487 &  161.234\\
& 50  & 38.3550  & 60.5895   &78.646  & 117.359  & 126.413 &  164.123\\
& 60  & 38.4064  & 60.6502   &78.7409  & 117.580  & 126.763  & 164.641\\
\hline
&  Order   & 1.68         &      1.90       &        1.89        &       1.84           &     1.73       &         1.70
    \\\hline
& Extrap.&38.5495 &   60.8048  & 78.9882 &  118.184  & 127.800 & 166.259 \\
 \hline
 $\CT_{h}$& $N$ & $\l_{h1}$ & $\l_{h2}$ & $\l_{h3}$ & 
 $\l_{h4}$ & $\l_{h5}$& $\l_{h6}$  \\
\hline
&20 &  37.1216  & 58.8887 &  76.4390   &111.187 &  117.648 &  148.894\\
&40 &  38.0961  & 60.2981  & 78.3125  & 116.279  & 124.824  & 161.043\\
$\CT_{h}^{7} $&60 &  38.3168  & 60.5690  & 78.6692  & 117.276  & 126.3007  & 163.633\\
&80  & 38.4047  & 60.6648  & 78.7948  & 117.629  & 126.846  & 164.582\\
&100 & 38.4497  & 60.7094  & 78.8530  & 117.793  & 127.110  & 165.034\\
\hline
& Order  &1.66       &        1.94     &          1.96  &             1.92   &             1.83      &          1.79  \\\hline
& Extrap.  &38.5476  & 60.7943 &  78.9613   &118.111  & 127.635&   166.007
 \\
\hline
\end{tabular}
\label{tabla2}
\end{center}
\end{table}

From Table \ref{tabla2} we observe that the order of convergence for the first eigenvalue is not optimal. This
is expectable due the non-convexity of the chosen domain. In the other hand, the rest of the eigenvalues
converge to the extrapolated ones with double order, since in these cases, the singularity does not 
deteriorate the smoothness of the associated eigenfunctions. 

In Figure \ref{figL} we present plots for the first two eigenfunctions of the Laplace eigenproblem 
in the L-shaped domain, computed with different type of meshes.

\begin{figure}[H]
	\begin{center}
		\begin{minipage}{5cm}
			\centering\includegraphics[height=5cm, width=5cm]{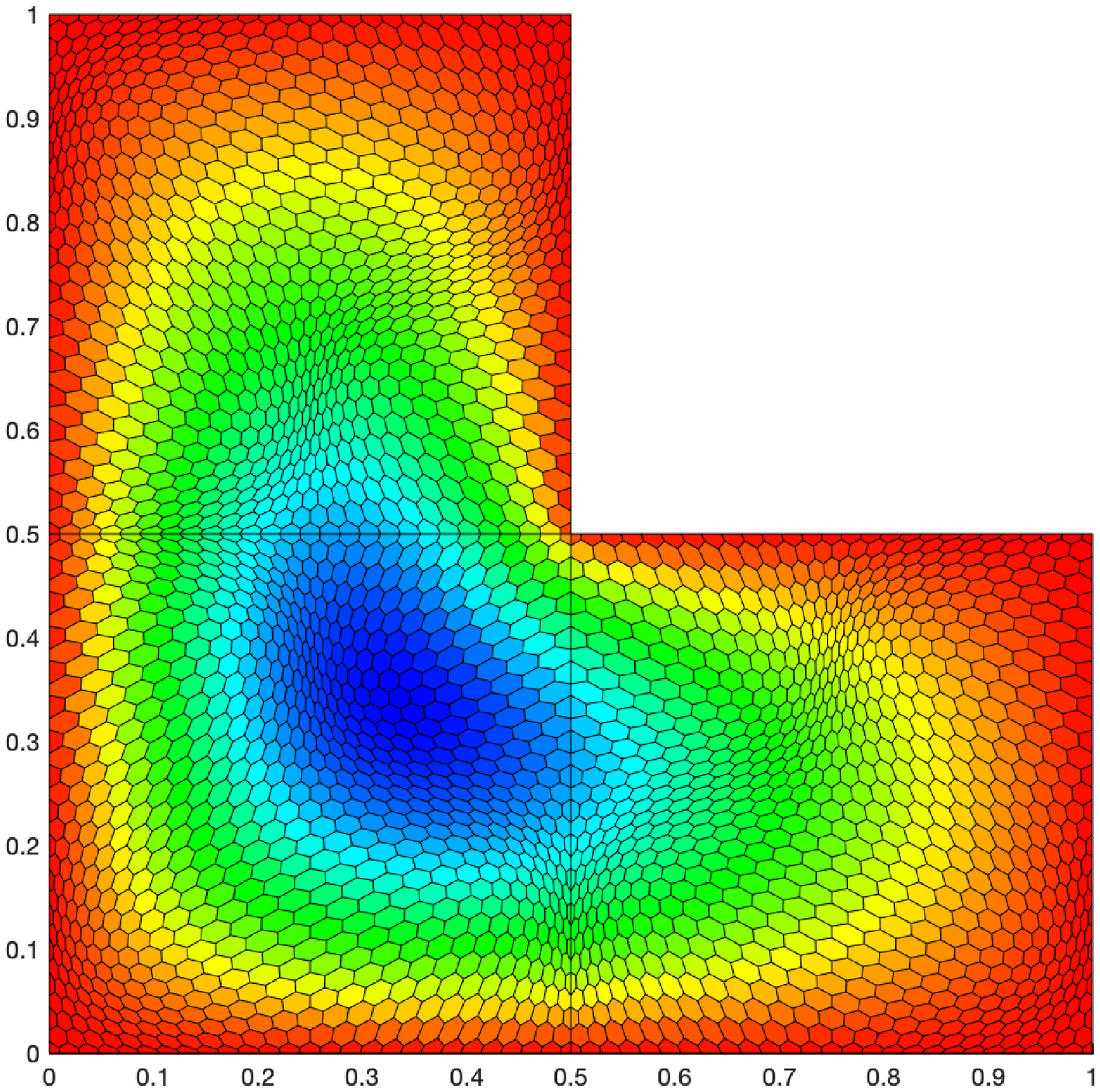}
		\end{minipage}
		\begin{minipage}{5cm}
			\centering\includegraphics[height=5cm, width=5cm]{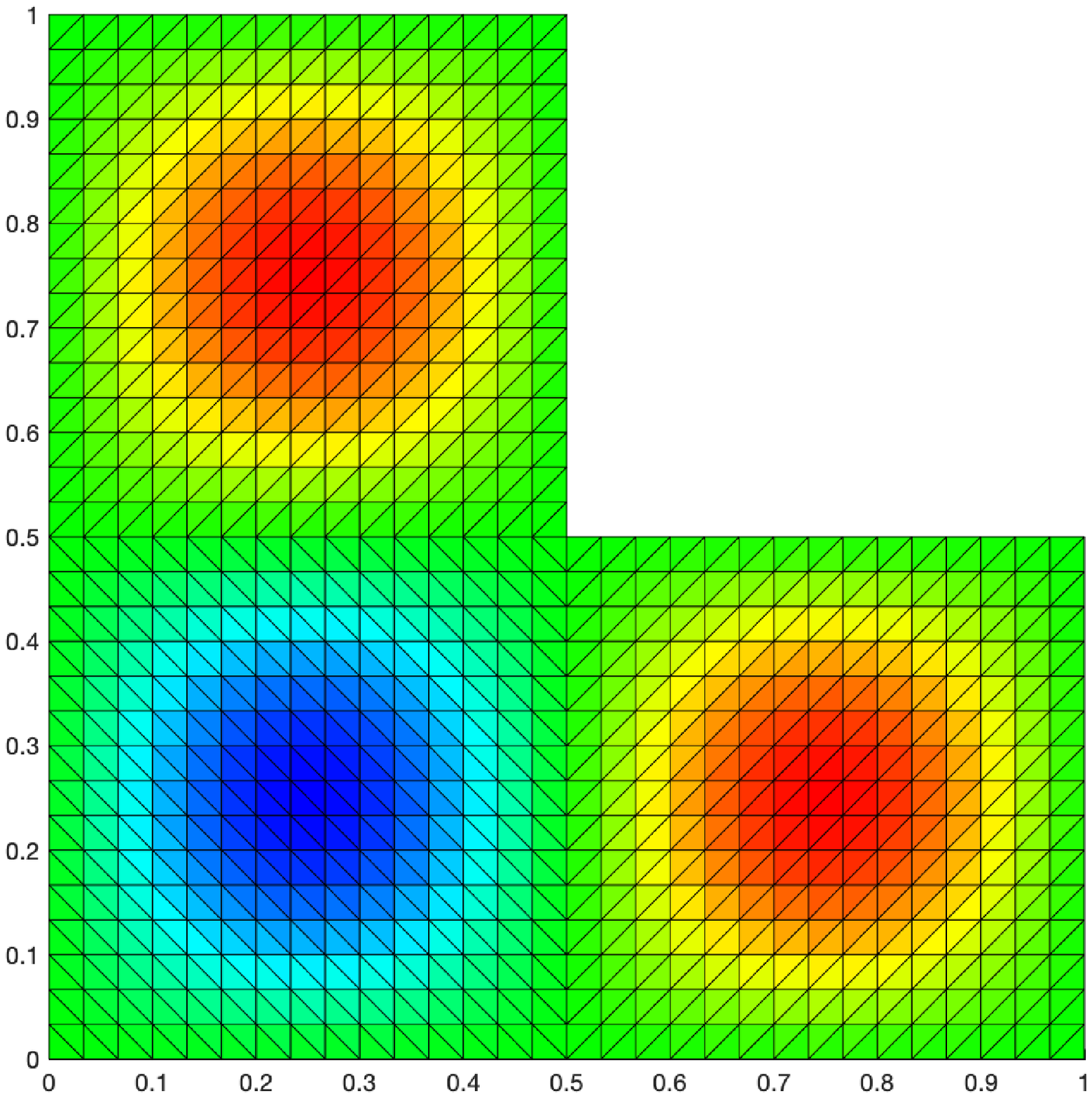}
		\end{minipage}
		
\caption{Test 2. Plots of the first two eigenfunctions for $\CT_{h}^{5}$ (left) and $\CT_{h}^{6}$ (right). }
		\label{figL}
	\end{center}
\end{figure}
\subsection{Square with mixed boundary conditions.}
Let us mention the natural extension of system \eqref{eq:laplace} to the mixed boundary conditions case. Let $\O\subset\mathbb{R}^2$ be an open bounded domain with
Lipschitz boundary $\Gamma$. We assume that this boundary is 
splitted in two parts $\Gamma_D$ and $\Gamma_N$ such that
$\Gamma:=\Gamma_D\cup\Gamma_N$. The Laplace eigenvalue problem reads as follows:
\begin{problem}
Find $(\l,u)\in\R\times\HuO$, $u\ne0$, such that
\begin{equation}
\label{eq:laplace_mixed}
\left\{\begin{array}{l}
-\Delta u=\lambda u\quad\text{in }\O,
\\[0.1cm]
u=0  \,\,\,\quad\quad\text{ on }\G_D.\\
u\cdot\boldsymbol{n}=0  \quad\text{ on }\G_N,
\end{array}\right. 
\end{equation}
\end{problem}
where $\boldsymbol{n}$ denotes the outward unitary vector respect to $\Gamma_N$. A mixed variational formulation of \eqref{eq:laplace_mixed} is the following: 
Find $(\l,\bsig, u)\in\R\times\H_0(\div,\O)\times\LO$,
$(\bsig,u)\neq(\boldsymbol{0},0)$, such that 
\begin{equation*}\label{2}
\begin{split}
\int_{\O}\bsig\cdot\btau+\int_{\O}\div\btau u=&0
\qquad\forall\btau\in\H_0(\div,\O),\\
\int_{\O}\div\bsig v=&-\lambda \int_{\O}uv\qquad\forall v\in\LO.
\end{split}
\end{equation*}
where the condition $u\cdot\boldsymbol{n}$ is imposed in the space $\H(\div,\O)$. The analysis of this variational formulations 
is analogous to the Dirichlet boundary condition case considered before. The main difference lies on the regularity of the solution $u$, 
which is affected due the mixed boundary conditions (see  \cite{grisvard}).

If the $\O$-domain is convex,  the analysis of convergence and error estimates holds with no major changes. However, for non-convex domains the analysis is more complex and this will be studied in a future work.

In the present test we consider as computational domain the square $\O:=(-1,1)^2$. The boundary conditions provided in 
\eqref{eq:laplace} are imposed as follows: the condition $u=0$ (i.e., condition on $\Gamma_D$) is considered in the top and bottom of the square, meanwhile the condition on $\Gamma_N$ is considered on the other two sides of the square.

We begin with the analysis of the effects of the stabilization $w_\E$ in the computation of the spectrum. Since we have mixed boundary conditions, which is obviously different compared with our already presented experiments only for the boundary condition $u=0$, it is expectable that spurious modes can arise for certain stabilizations. The following tables report computed eigenvalues for two different meshes: one of triangles ($\CT_h^1$) and the other of squares ($\CT_h^2$). We present this two choices only for simplicity. We mention that for other polygonal meshes the results are similar.

For the experiments, we fix $\CT_h^1$ and $\CT_h^2$ in $N=10$. In tables \ref{TABLA:mix2} and  \ref{TABLA:mix22}  we report the first ten eigenvalues computed with our method, considering the meshes  mentioned before and for different values of $w_\E$.  The numbers in boxes represent spurious eigenvalues and the last column present the exact eigenvalues of the problem, which we compare with the computed ones.

\begin{table}[H]
\begin{center}
\caption{Test 3. Computed lowest eigenvalues for different values of $w_{\E}$ with $N=10$.}
\vspace{0.3cm}
\begin{tabular}{|l|l|l|l|l|l|l|l|l|l|}
\hline
\multicolumn{6}{ |c| }{$\CT_{h}^{1}$} \\
\hline
 $w_{\E}=0$ & $w_{\E}=\frac{1}{100}$ & $w_{\E}=\frac{1}{10}$ & $w_{\E}=\frac{1}{4}$ & 
 $w_{\E}=\frac{1}{2}$ & $\l_{hi}$ \\
\hline
    2.4718   & 2.4716  &  2.4691  &  2.4650 &   2.4582 & 2.4674 \\ 
    4.9502 &   4.9490 &   4.9385 &   4.9212 &   4.8925&  4.9348  \\
    9.9277 &   9.9230  &  9.8810 &   9.8118  &  9.6984  & 9.8701\\
   12.4172 &  12.4101 &  12.3466  & 12.2403 &  12.0572  & 12.3373 \\
   12.4407  & 12.4329 &  12.3634  & 12.2511 &  12.0779 & 12.3374\\
   20.0183  & 19.9999&   19.8354  & 19.5668 &  19.1338 &  19.7404\\
   22.5181  & 22.4946  & 22.2854 &  21.9447 &  21.3974 &22.2094 \\
   25.0231  & 24.9958  & 24.7517&   24.3525 &  23.6593 & 24.6753 \\
   25.1170  & 25.0863  & 24.8131 &  24.3723 &  23.7235 & 24.6757\\
   32.6591  & 32.6124&   32.1980&   31.5263 &  30.4107&   32.0801\\
\hline
 $w_{\E}=1$ & $w_{\E}=1.5$ & $w_{\E}=2$ & $w_{\E}=5$ & $w_{\E}=10$  &$\l_{hi}$\\

\hline
 2.4447  &  2.4314  &  2.4182 &   2.3416   & 2.2236&2.4674 \\
    4.8362 &   4.7810 &   4.7270&    4.4256  &  3.9962&  4.9348 \\
    9.4787  &  9.2680   & 9.0657  &  8.0046   & 6.6680&  9.8701\\
   11.6989 &  11.3597 &  11.0385  &  9.4235  &  7.5451& 12.3373\\
   11.7524 &  11.4439  & 11.1505   & 9.6523   & 7.8569& 12.3374\\
   18.3195 &  17.5672  & 16.8702  & 13.5654   &\fbox{10.0645}& 19.7404\\
   20.3747 &  19.4378  & 18.5765  & \fbox{14.5876}   &\fbox{10.5526}& 22.2094\\
   22.3529 &  21.1738  & 20.1051  & \fbox{15.3520}  & \fbox{10.9359}& 24.6753\\
   22.5443 &  21.4719  & 20.4918  & 16.0154  & \fbox{11.5723}&24.6757 \\
   28.2992 &  26.4446  & 24.8059  & \fbox{17.9568}  & \fbox{12.0748}&32.0801\\
\hline
\end{tabular}
\label{TABLA:mix2}
\end{center}
\end{table}
\begin{table}[H]
\begin{center}
\caption{Test 3. Computed lowest eigenvalues for different values of $w_{\E}$ with $N=10$.}
\vspace{0.3cm}
\begin{tabular}{|l|l|l|l|l|l|l|l|l|l|}
\hline
\multicolumn{6}{ |c| }{$\CT_{h}^{2}$} \\
\hline
 $w_{\E}=0$ & $w_{\E}=\frac{1}{100}$ & $w_{\E}=\frac{1}{10}$ & $w_{\E}=\frac{1}{4}$ & 
 $w_{\E}=\frac{1}{2}$  &$\l_{hi}$ \\
\hline
     2.5086  &  2.5073   & 2.4960 &   2.4775  &  2.4472  & 2.4674\\
    5.0171 &   5.0146  &  4.9921   & 4.9550   & 4.8943   &  4.9348\\
   10.5573 &  10.5350 &  10.3390 &  10.0279  &  9.5492 &  9.8701 \\
   13.0658 &  13.0423 &  12.8350 &  12.5054 &  11.9963&  12.3373  \\
   13.0658 &  13.0423&   12.8350&   12.5054 &  11.9963 & 12.3374  \\
   21.1146 &  21.0701 &  20.6780 &  20.0559 &  19.0983  & 19.7404\\
   25.9616 &  25.8275&   24.6801 &  22.9788 &  20.6107 & 22.2094   \\
   28.4702 &  28.3348 &  27.1762 &  25.4563 &  23.0579 & 24.6753  \\
   28.4702 &  28.3348 &  27.1762 &  25.4563 &  23.0579 &  24.6757\\
   36.5189 &  36.3626 &  35.0191 &  33.0067&   30.1599  &  32.0801\\
\hline
 $w_{\E}=1$ & $w_{\E}=1.5$ & $w_{\E}=2$ & $w_{\E}=5$ & $w_{\E}=10$  &$\l_{hi}$\\
\hline
 2.3887    &2.3330   & 2.2798  &  2.0055  &  1.6705 &2.4674  \\
    4.7774 &   4.6660 &   4.5596 &   4.0110 &   3.3409& 4.9348\\
    8.7168  &  8.0179 &   7.4227  &  5.1355  &  3.3930& 9.8701\\
   11.1055 &  10.3509&    9.7025&    7.1410&   \fbox{4.1925}&12.3373\\
   11.1055 &  10.3509&    9.7025  &  7.1410 &   \fbox{4.5674}&12.3374\\
   17.0886 &  14.5946&   12.7359 &    \fbox{7.2193}&    \fbox{4.7619}&19.7404 \\
   17.4335 &  16.0357&   14.8455 &    \fbox{8.4073}&    \fbox{4.8714}&22.2094\\
   19.4774 &  16.9276&   15.0157&    \fbox{ 9.0909} &   \fbox{4.9359}& 24.6753 \\
   19.4774 &  16.9276 &  15.0157&    9.2247&    \fbox{4.9737}& 24.6757\\
   25.6777&   20.4314 &   \fbox{16.9652} &   9.2247&   \fbox{4.9937}&32.0801\\
\hline
\end{tabular}
\label{TABLA:mix22}
\end{center}
\end{table}
We observe from tables  \ref{TABLA:mix2} and  \ref{TABLA:mix22}  that the behaviour of the spurious eigenvalues clearly depend on the choice of $w_\E$. Notice  that when $w_\E$ is close to zero (even for the case $w_\E=0$) the spurious vanish, meanwhile when $w_\E$ increases, the spurious eigenvalues appear, for both $\CT_h^1$ and $\CT_h^2$, from $w_\E=5$ in forward. Moreover, we observe that the method do not introduce spurious for $w_\E<2$. This phenomenon is analogous for other meshes. 

We remark that  the appearance of spurious eigenvalues is not visible for the domain with the Dirichlet boundary condition, which leads to conclude that the mixed boundary is also a relevant factor at the moment to compute the spectrum of the Laplace eigenvalue problem.

Now we are interested to observe if the refinement of the meshes influence the appearance  of spurious eigenvalues when we choose  a value of $w_\E$ that introduces pollution of the spectrum. To do this task, we chose
$w_\E=10$ (since with this value of $w_E$ several spurious eigenvalues are observed) and refine the meshes $\CT_h^1$ and $\CT_h^2$ that we have used in the previous tests.

\begin{table}[H]
\begin{center}
\caption{Test 3. Computed lowest eigenvalues for $w_{\E}=10$.}
\vspace{0.3cm}
\begin{tabular}{|l|l|l|l|l|l|l|l|l|l|}
\hline
\multicolumn{5}{ |c| }{$\CT_{h}^{1}$} \\
\hline
$\l_{hi}$& $N=10$ & $N=20$ & $N=30$ & $N=40$ \\
\hline
2.4674 & 2.2236    &2.4071   & 2.4399 &   2.4520\\
  4.9348  &    3.9962 &   4.6921   & 4.8230&    4.8729\\
 9.8701&    6.6680  &  8.9636   & 9.4461  &  9.6299\\
12.3373&    7.5451  & 10.9239  & 11.6687   &11.9544\\
12.3373&    7.8569  & 10.9440  & 11.6810  & 11.9617\\
19.7404&   \fbox{10.0645} &  16.4216  & 18.0867 &  18.8010\\
 22.2094&   \fbox{10.5526} &  18.0007  & 20.1514  & 21.0296\\
 24.6753&   \fbox{10.9359} &  19.5387  & 22.0980  & 23.1650\\
 24.6757&   \fbox{11.5723} &  19.8019  & 22.1888  & 23.2291\\
 32.0801&   \fbox{12.0748} &  23.9208  & 27.8445  & 29.5927\\
   \hline
\hline
\multicolumn{5}{ |c| }{$\CT_{h}^{2}$} \\
\hline
$\l_{hi}$& $N=10$ & $N=20$ & $N=30$ & $N=40$ \\
\hline
2.4674 &1.6705  &  2.2045 &   2.3432 &   2.3960\\
  4.9348 &   3.3409&    4.4090  &  4.6864&    4.7919\\
 9.8701 &   3.3930  &  6.6819  &  8.1431  &  8.8180\\
12.3373 &  \fbox{4.1925}  &  8.8864 &  10.4863 &  11.2139\\
12.3373 &   \fbox{4.5674}  &  8.8864  & 10.4863&   11.2139\\
19.7404 &   \fbox{4.7619} &  10.7096&   15.0342&   17.5084\\
 22.2094&   \fbox{4.8714} &  12.9141 &  16.2862&   17.6359\\
24.6753 &   \fbox{4.9359} &  12.9141 &  17.3774&   19.9043\\
 24.6757 &  \fbox{ 4.9737} &  13.3637 &  17.3774 &  19.9043\\
  32.0801&   \fbox{4.9937} &  \fbox{13.5721} &  21.3607 &  26.3263\\
\hline
\end{tabular}
\label{TABLA:mix3}
\end{center}
\end{table}
      
    From table \ref{TABLA:mix3} we observe that the spurious eigenvalues disappear when the meshes are refined, which is expectable, since the accuracy of the VEM is improved when the meshes are refined. Similar results have been obtained for other numerical methods that depend on some particular stabilization parameter (for instance, the DG method \cite{LM,MR3962898}).

      In table \ref{tabla:mix1} we report the first six eigenvalues computed with our method considering different polygonal meshes and $w_\E=1$. It is clear that the double order of convergence is obtained with this configuration of the geometry, as is expected.
\begin{table}[H]
\begin{center}
\caption{Test 3. The lowest computed eigenvalues $\l_{hi}$, $1\le i\le6$
for different $\CT_{h}$.}
\vspace{0.3cm}
\begin{tabular}{|c|c|c|c|c|c|c|c|}
\hline
$\CT_{h}$& $N$ & $\l_{h1}$ & $\l_{h2}$ & $\l_{h3}$ & 
 $\l_{h4}$ & $\l_{h5}$& $\l_{h6}$  \\
\hline
 &8 &  2.4343 &   4.8005   & 9.3442  & 11.4928  & 11.5519 &  17.7203\\
   &16 &   2.4592    &4.9004   & 9.7437 &  12.1144  & 12.1295  & 19.2004\\
$\CT_{h}^{1}$    &32  &  2.4653   & 4.9263  &  9.8362 &  12.2820  & 12.2839 &  19.6013\\
   &64   & 2.4669  &  4.9327&    9.8613  & 12.3237 &  12.3239&   19.7054\\ \hline
      &   Order &   2.0200   & 1.9600 &   2.0800  &  1.9100 &   1.9100 &   1.8900\\\hline
      &   Extr.  &  2.4673&    4.9351 &   9.8670 &  12.3406 &  12.3391 &  19.7470\\   \hline
\hline
&8  & 2.3465 &   4.6931  &  8.1753 &  10.5219  & 10.5219  & 16.3507\\
  & 16&    2.4361 &   4.8722&    9.3862  & 11.8223  & 11.8223&   18.7724\\
  $\CT_{h}^{2}$  &32  &  2.4595  &  4.9190  &  9.7443&   12.2038 &  12.2038&   19.4886\\
  & 64   & 2.4654  &  4.9308    &9.8380 &  12.3034 &  12.3034&   19.6760\\\hline
   &Order   & 1.9400   & 1.9400   & 1.7900&    1.8000  &  1.8000   & 1.7900\\\hline
  &Extr.       &    2.4676 &   4.9353 &   9.8822   &12.3496&   12.3496 &  19.7644\\    \hline
\hline
& 8&   2.3474 &   4.6887&    8.1662  & 10.4683&   10.5147&   16.3106\\
 &  16 &  2.4363  &  4.8705 &   9.3909 &  11.8172 &  11.8204&   18.7349\\
$\CT_{h}^{3}$ &  32  &  2.4596 &   4.9186 &   9.7456 &  12.2024&  12.2029 &  19.4837\\
 &  64 &  2.4654 &   4.9308  &  9.8383 &  12.3032&   12.3032 &  19.6746\\\hline
&  Order  &  1.9500   & 1.9300  &  1.8100  &  1.8300&    1.8000   & 1.7400\\\hline
&         Extr.  &  2.4675  &  4.9354  &  9.8811  & 12.3473  & 12.3495 &  19.7787\\\hline
\hline
& 8  & 2.3575  &  4.6664  &  8.3096 &  10.4737   &10.5728 &  16.0176\\
&   16  &  2.4390  &  4.8649 &   9.4302   &11.8119  & 11.8429 &  18.6639\\
$\CT_{h}^{4}$ &  32 &   2.4603 &   4.9171 &   9.7561&   12.2014 &  12.2096 &  19.4593\\
  & 64   & 2.4656 &   4.9304 &   9.8410 &  12.3028 &  12.3049 &  19.6685\\\hline
  &      Order  &  1.9500   & 1.9300  &  1.8100 &   1.8100 &   1.8200  &  1.7700\\\hline
   &     Extr.   & 2.4676 &   4.9355 &   9.8799 &  12.3491 &  12.3477 &  19.7701\\\hline
     \end{tabular}
\label{tabla:mix1}
\end{center}
\end{table}
                        
Finally, in figure \ref{FIG:MPMix} we present plots of  the first four eigenfunctions for the Laplace eigenvalue problem with mixed boundary conditions, obtained with different polygonal meshes.
\begin{figure}[H]
\begin{center}
\begin{minipage}{5cm}
\centering\includegraphics[height=5cm, width=5cm]{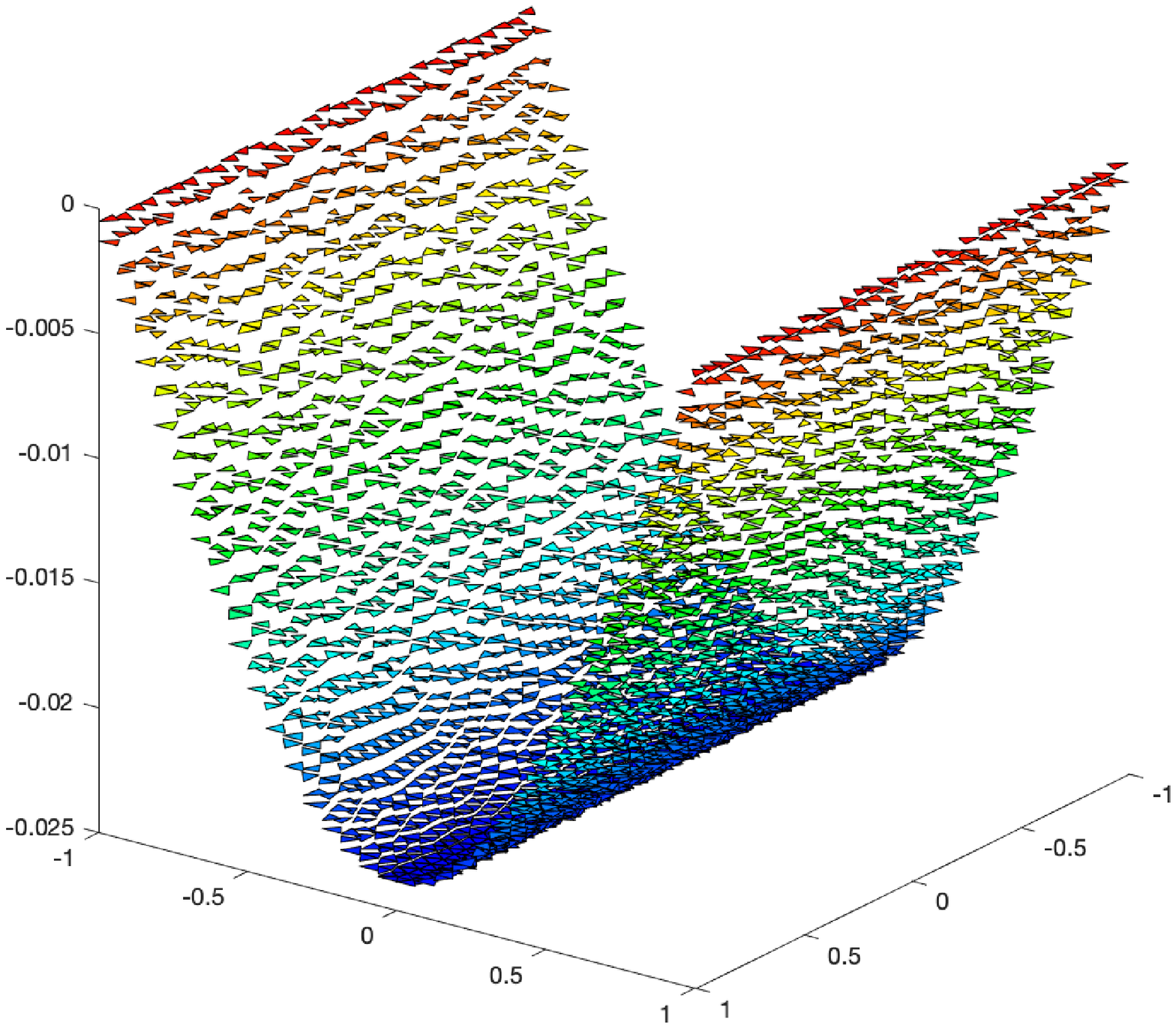}
\end{minipage}
\begin{minipage}{5.cm}
\centering\includegraphics[height=5cm, width=5cm]{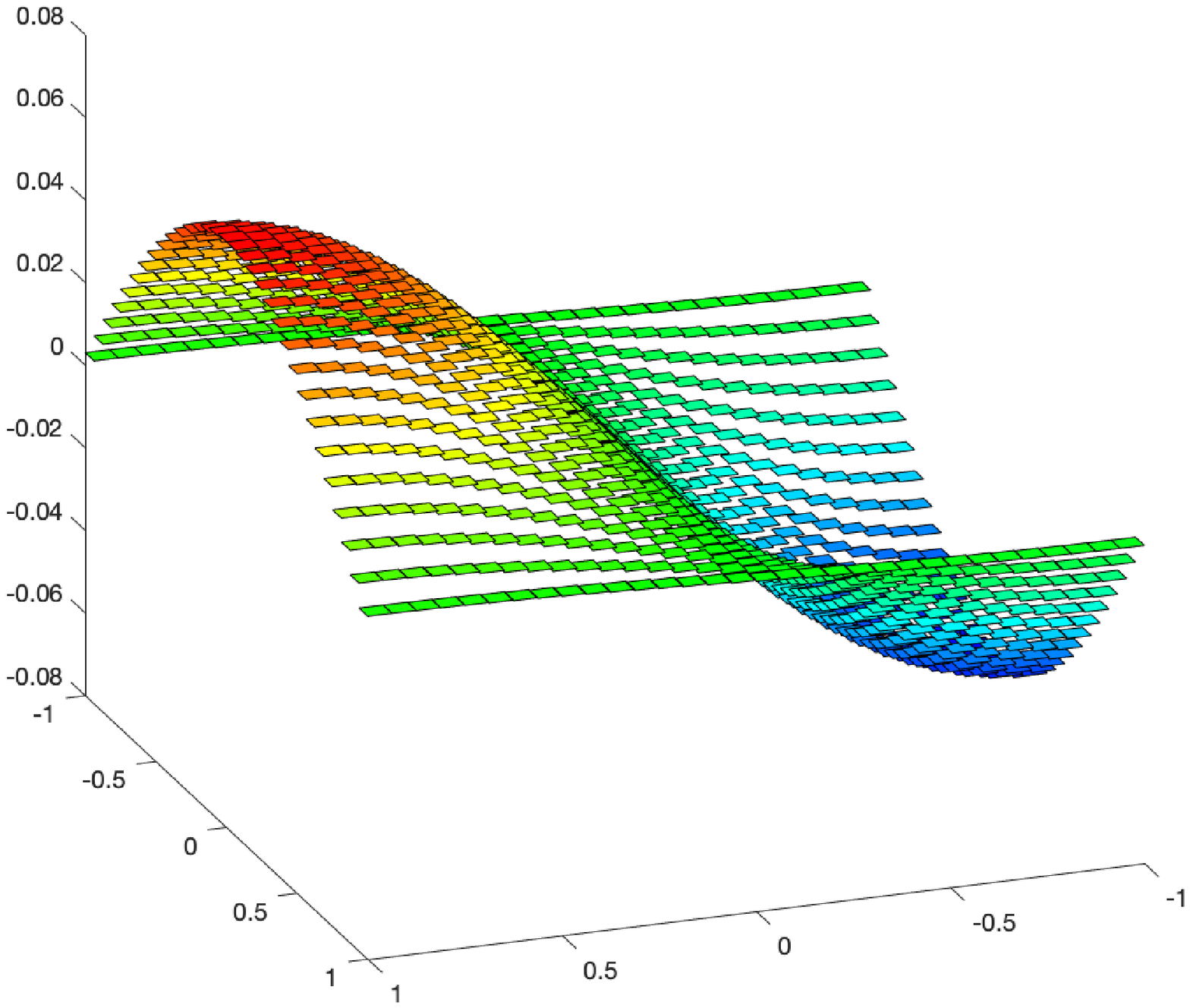}
\end{minipage}
\begin{minipage}{5cm}
\centering\includegraphics[height=5cm, width=5cm]{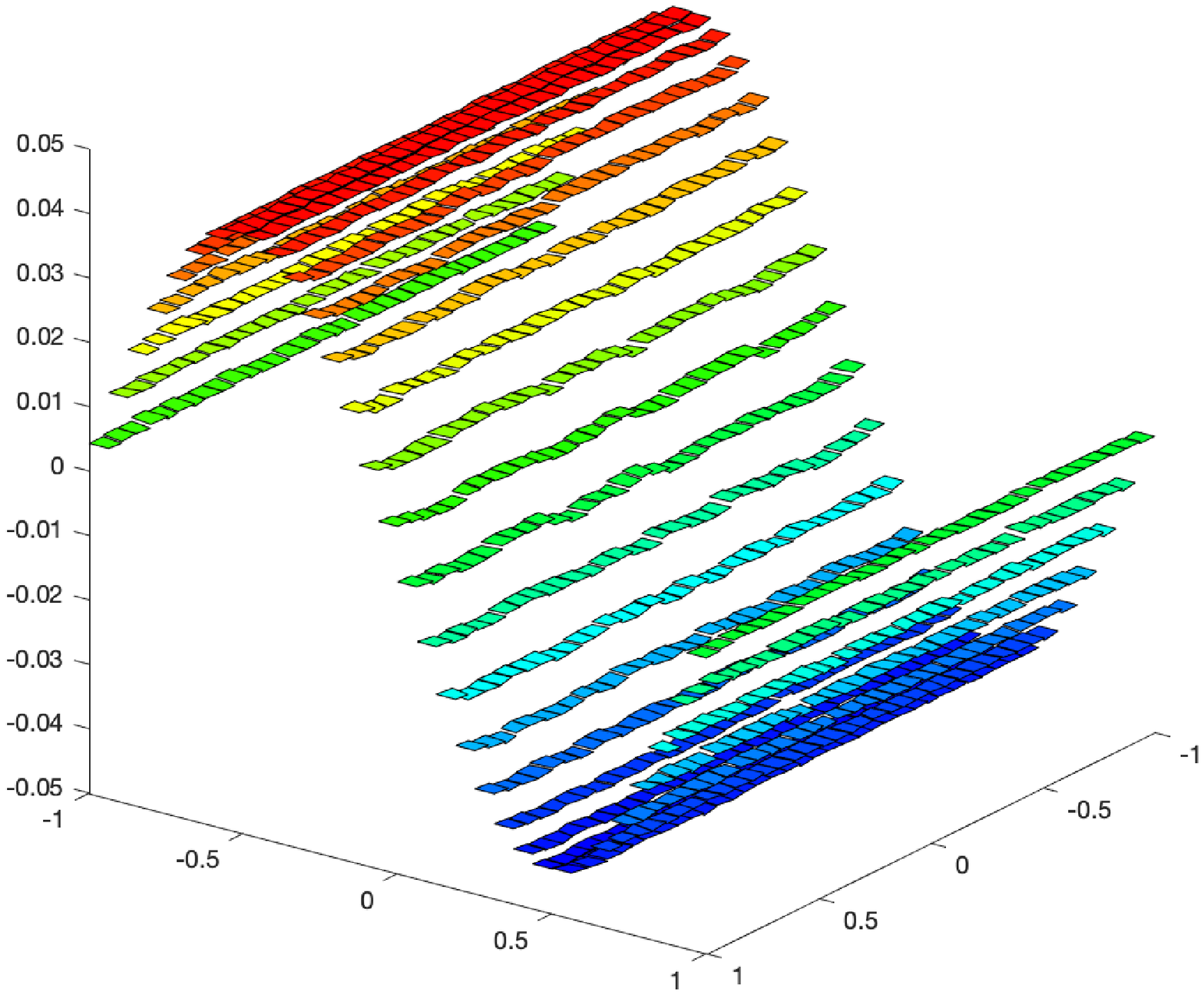}
\end{minipage}
\begin{minipage}{5cm}
\centering\includegraphics[height=5cm, width=5cm]{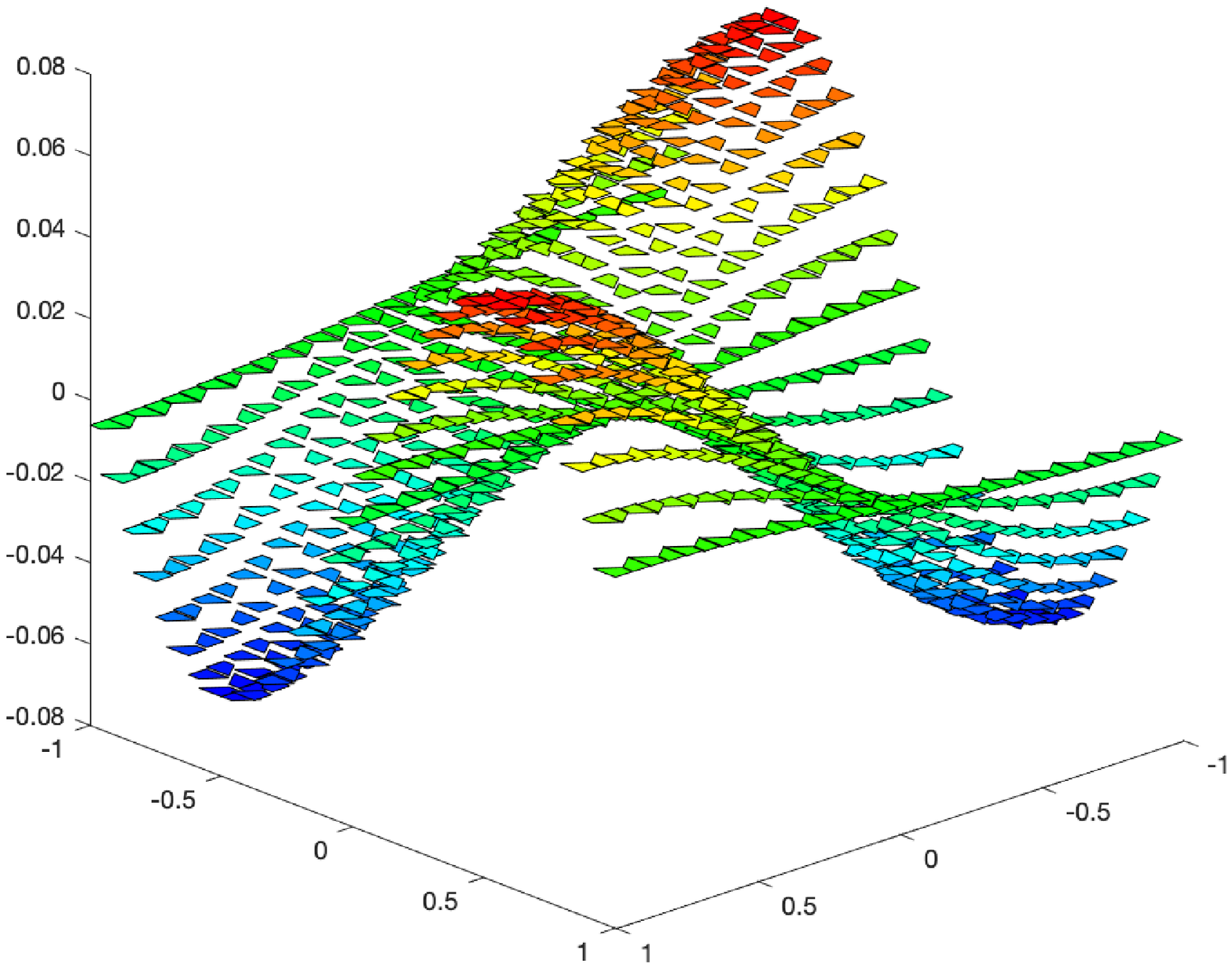}
\end{minipage}
\caption{Test 3. Plots of the first four   eigenfunctions for $\CT_{h}^{1}$ (top left), $\CT_{h}^{2}$ (top right), $\CT_{h}^{3}$ (bottom left) and $\CT_{h}^{4}$ (bottom right) computed with different meshes and $N=32$.}
\label{FIG:MPMix}
\end{center}
\end{figure}

  

\section*{Acknowledgments}
The authors are deeply grateful to Prof. David Mora (Universidad del B\'io-B\'io, Chile) for the fruitful discussions and  comments. 

FL was partially supported by
CONICYT-Chile through FONDECYT Postdoctorado project 3190204 (Chile). GR  was supported by
CONICYT-Chile through FONDECYT project 11170534 (Chile). 

\bibliographystyle{amsplain}

\end{document}